\newtheorem{theorem}{Theorem}[section] 
\newtheorem{lemma}[theorem]{Lemma}     
\newtheorem{proposition}[theorem]{Proposition}
\newtheorem{algorithm}{Algorithm}
\title{Describing elements of the genus-2 Goeritz group of $S^3$}
\author{Sreekrishna Palaparthi, Swapnendu Panda\\ \tiny{Department of Mathematics, Indian Institute of Technology Guwahati}}
\begin{document}
\maketitle

\begin{abstract}
In this  article we present a finite generating set $G_2$ of $\mathcal{H}_2$, the genus-2 Goeritz group of $S^3$, in terms of Dehn twists about certain simple closed curves on the standard Heegaard surface. We present an algorithm that describes an element $\psi\in\mathcal{H}_2$ as a word in the alphabet of $G_2$ in a certain format. Using a complexity measure defined on reducing spheres, we show that such a description of $\psi$ is unique.
\end{abstract}

\section{Introduction} 
\label{intro}

	 The genus $g$ Heegaard splitting of the three sphere is a decomposition of $S^3$ as $V_g \cup_{\Sigma_g} W_g$
	where $V_g$  and $W_g$ are genus $g$ handlebodies in $S^3$ glued along their common boundary $\Sigma_g = \partial V_g = \partial W_g$. If $\Sigma_g$ is the standard unknotted genus $g$ surface, then we call this the standard genus $g$ Heegaard splitting of $S^3$. The set of isotopy classes of orientation preserving homeomorphisms of $S^3$ that leave the standard $\Sigma_g$ invariant naturally forms a group, $\mathcal{H}_g$, and is called the genus $g$  Goeritz group. Since elements of $\mathcal{H}_g$, when restricted to $\Sigma_g$, are elements in the mapping class group of $\Sigma_g$, $\mathcal{MCG}(\Sigma_g)$, $\mathcal{H}_g$ can be thought of as a subgroup of  $\mathcal{MCG}(\Sigma_g)$. This group can also be thought of as the set of elements of $\mathcal{MCG}(\Sigma_g)$, which can be extended to isotopy classes of automorphisms of $S^3$.
	
	The study of Goeritz group of the three sphere dates back to 1930s. Early work in this direction includes \cite{goeritz1933abbildungen} which proved that the $\mathcal{H}_2$ is finitely generated. He also gave a set of four generators. \cite{powell1980homeomorphisms} attempted a generalization of Goeritz's result for higher genus cases. He introduced a set of generators for the Goeritz group $\mathcal{H}_g$. These automorphisms are termed as `Powell generators'. But later on \cite{scharlemann2003automorphisms} identified a gap in Powell's proof. He produced an updated proof for the finite generation of $\mathcal{H}_2$ in 2003 and he established that $\mathcal{H}_2$ is generated by the four automorphisms $\alpha,\beta,\gamma$ and $\delta$ described in \cite{scharlemann2003automorphisms}.

\cite{akbas} extended Scharlemann's work by providing a finite presentation of $\mathcal{H}_2$. He established the acyclic nature of a certain graph $\tilde{\Gamma}$ constructed in \cite{scharlemann2003automorphisms} and using this he gave a finite presentation for $\mathcal{H}_2$.

\cite{cho} produced an alternate proof of the fact that the graph $\tilde{\Gamma}$ in \cite{scharlemann2003automorphisms} and \cite{akbas} is a tree. He used primitive disks and constructed a primitive disk complex $P(V)$. He finally constructed a graph $T$ in the barycentric subdivision of $P(V)$ and showed that $T$ is a tree. He also demonstrated that $T$ and the tree in \cite{akbas} can be reconciled.

\cite{freedman2018powell} proved the finite generation of the Goeritz group $\mathcal{H}_3$ of the genus three Heegaard splitting of the three sphere. They used the generators proposed in  \cite{powell1980homeomorphisms}. They had further conjectured that the same set of generators will generate the Goeritz groups for the higher genus cases. This is called the Powell's conjecture and is still open for genus greater than three.

\cite{zupan2019powell} constructed a curve complex by the reducing spheres on a standard genus $g$ Heegaard splitting surface and studied some relations between the reducing sphere complex and the Powell Conjecture. He showed that Powell conjecture is true if and only if the said reducing sphere complex is connected. Recently \cite{scharlemann2019powell} has announced that one of the Powell generators in \cite{freedman2018powell} is redundant.

Despite being finitely presented, we know how difficult it can be to algorithmically describe every element of a group. Likewise, the algorithms in \cite{scharlemann2003automorphisms}, \cite{akbas} and \cite{cho} do not tell us how to uniquely represent every element of $\mathcal{H}_2$.

In this article, we represent every element of $\mathcal{H}_2$ in a unique way such that no two representations are the same. In showing so, we give yet another proof of finite generation of $\mathcal{H}_2$ using the description of the stabilizer of the standard reducing sphere in \cite{scharlemann2003automorphisms}. We begin by expressing three representatives, $\beta$, $\varphi$ and $\varphi\nu$ of distinct automorphism classes in $\mathcal{H}_2$ as Dehn twists about non-separating curves on the Heegaard surface $\Sigma_2$. To every reducing sphere $Q$, we associate a certain triple of non-negative integers, $T_Q$, of the geometric intersection numbers of the curve $Q \cap \Sigma_2$ with certain curves on $\Sigma_2$. We define a positive integer $\mathcal{C}(Q)$ based on $T_Q$ such that the unique reducing sphere with $\mathcal{C}(Q) =1$ is the standard reducing sphere $P$. Our main result then is an algorithm to write an automorphism in $\mathcal{H}_2$ as a word in the alphabet of $G_2 =\{\beta, \varphi, \nu, \alpha \}$ as follows. Since an automorphism $f$ in $\mathcal{H}_2$ maps the standard reducing sphere $P$ to some reducing sphere $Q$, we start with the reducing sphere $Q$. Using $T_Q$ we give a criteria to determine an automorphism among the four, $\beta$, $\beta^{-1}$, $\varphi$ and $\varphi\nu$, which when applied to $Q$ gives a new reducing sphere $R$ such that $\mathcal{C}(R) <\mathcal{C}(Q)$. We can explicitly calculate the reducing sphere $R$ by applying the Dehn twist expression of the automorphism applied to $Q$. Now we repeat this process for $R$. At each stage we append the automorphism just applied to the word constructed so far. The algorithm terminates when the integer $\mathcal{C}(R)$ reduces to $1$ and $R$ is the standard sphere. So $\mathcal{C}(Q)$ serves as a complexity measure. We show that the automorphism $f$ has the form $$f=\alpha^{a}\nu^{b}\beta^{c}\prod \left(\varphi\nu^{s_i}\beta^{r_i} \right)=\alpha^{a}\nu^{b}\beta^{c}\left(\varphi\nu^{s_n}\beta^{r_n} \right)\circ\cdots\circ\left(\varphi\nu^{s_1}\beta^{r_1} \right) \qquad (*) $$ where $a, b, s_i=0,1$ and $c, r_i\in\mathbb{Z}$. Since the complexity measure is monotonous while applying the automorphisms in $G_2$ in the order as in $(*)$, we conclude that every element in $\mathcal{H}_2$ can be uniquely written in the form $(*)$.

This is part of the thesis work of the second author. He is examining how the techniques in this article can be used to prove finite generation of $\mathcal{H}_g$ for $g\geq3$.

\section{Setup and Preliminaries}\label{setup}

   We refer the reader to \cite{farb} for basic terminology related to mapping class groups of surfaces and \cite{scharlemann2003automorphisms} and \cite{akbas} for terms related to Heegaard splittings. Consider a standardly-embedded genus two surface $\Sigma_2$ in $S^3$. Let $S^3=V_2\cup_{\Sigma_2} W_2$ be the corresponding Heegaard splitting of $S^3$. 

   Consider the curves shown in Figure \ref{gen2-loops} on $\Sigma_2$:
		\begin{figure}[h]
			\centering\includegraphics[width=8cm]{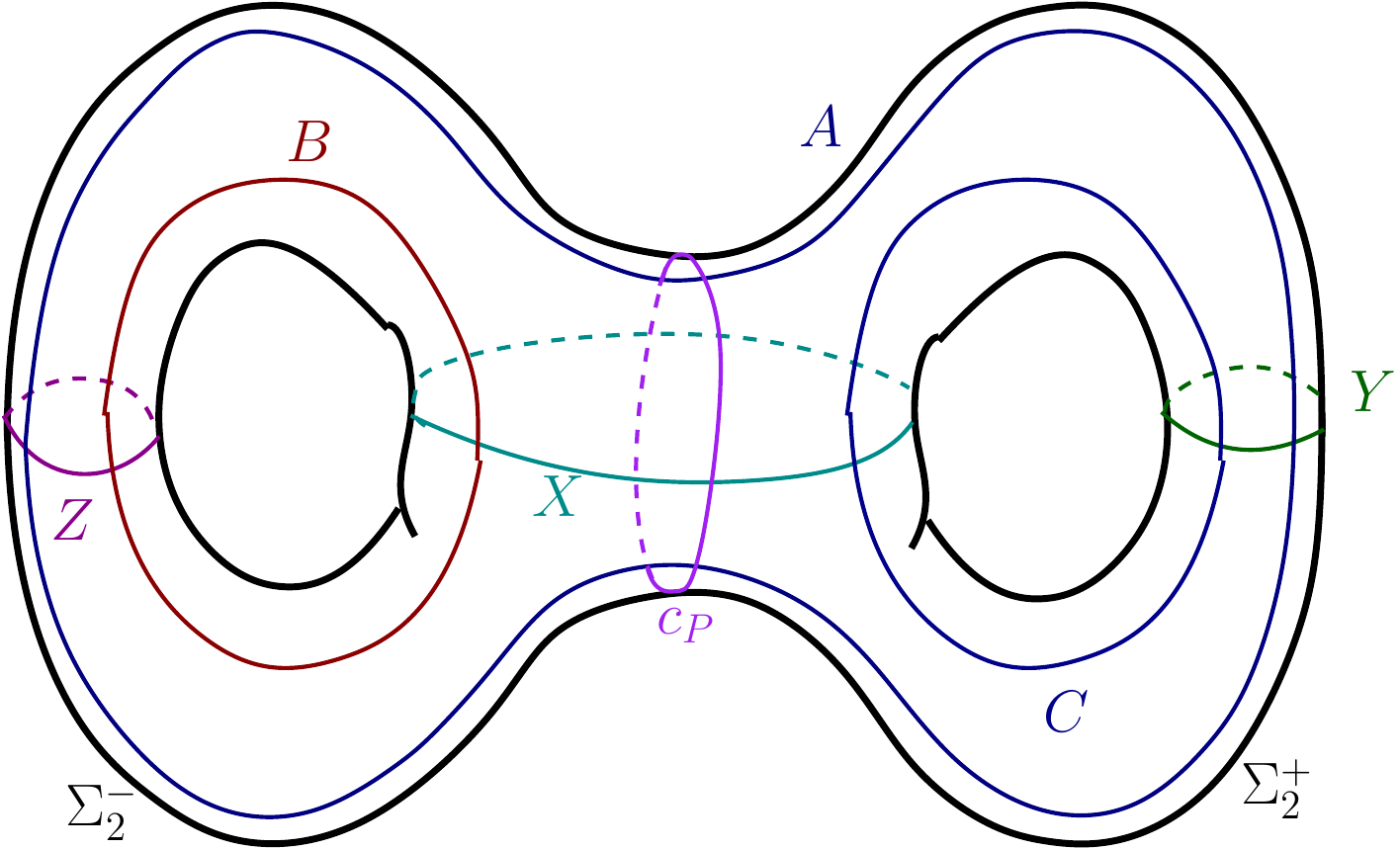}
			\caption{The standard set of curves on $\Sigma_2$}\label{gen2-loops}
		\end{figure}
	$A,B,C,X,Y,Z$ are non-separating curves on $\Sigma_2$. $A\cup B\cup C$ separates $\Sigma_2$ into two thrice boundered spheres, call them $\Sigma_2'$ and $\Sigma_2''$. If $J$ and $K$ are isotopy classes of curves on $\Sigma_2$, then by $J\cdot K$ we mean the geometric intersection of $J$ and $K$. For any reducing sphere $Q$, we call the essential separating circle $c_Q=Q\cap\Sigma_2$ on $\Sigma_2$ as the reducing curve corresponding to $Q$. $P$ is the reducing sphere whose reducing curve is $c_P$ as shown in figure \ref{gen2-loops}. We call $P$ as the standard reducing sphere. $P$ separates $\Sigma_2$ into two genus one surfaces with one boundary. We call these component surfaces as genus one summands and denote them by $\Sigma_2^\pm$ (see figure \ref{gen2-loops}).
	
	 Throughout this article, we assume that $c_Q$ intersects the curves $A, B, C, X, Y, Z$ minimally and transversely. Since a simple closed curve on a thrice-boundered sphere either bounds a disk or is boundary parallel, the essential, simple, closed curve $c_Q$ has to intersect at least one of $A, B$ or $C$. $A \cup B \cup C$ separates $c_Q$ into essential, proper, simple arcs with endpoints on $A,B$ and $C$. Since every such arc requires exactly two endpoints, the total number of such arcs on both $\Sigma_2'$ and $\Sigma_2''$ are equal. We classify such arcs of $c_Q$ on the thrice boundered spheres $\Sigma_2'$ and $\Sigma_2''$ as unordered pairs of the following types: $(a,a), (b,b), (c,c), (a,b), (b,c), (a,c)$, where symbols $a,b,c$ represent any point of intersection of $c_Q$ with $A,B$ and $C$ respectively. For example the unordered pair $(a,b)$ denotes an arc with ends on $A$ and $B$. Further, throughout this article, when we write an arc of a certain type, eg. $(a,b)$ type, we always mean an essential, proper simple arc of that type.
	
	Because $c_Q$ is simple, not all arc-types can co-exist on $\Sigma_2'$ and $\Sigma_2''$  Table \ref{arc-table} presents such restrictions.
	\begin{table}[h]
		\caption{Arcs with intersecting counterparts}\label{arc-table}
		\centering\begin{tabular}{|c|c|}\hline
			If exists &  Ones that cannot exist\\ \hline
			\fbox{$(a,a)$} & \fbox{$(b,b), ~(b,c),~ (c,c)$} \\ \hline
			\fbox{$(b,b)$} & \fbox{$(a,a), (c,c)$}, $(a,c)$ \\ \hline
			\fbox{$(c,c)$} & \fbox{$(a,a), (b,b)$}, $(a,b)$ \\ \hline
			$(a,b)$ & $(c,c)$\\ \hline
			$(a,c)$ & $(b,b)$\\ \hline
			\fbox{$(b,c)$} & \fbox{$(a,a)$}\\ \hline
		\end{tabular}
	\end{table}	
	


\noindent

	As in \cite{akbas}, in a genus one summand $\Sigma_2^{\pm}$, an arc of slope $0$ is referred to as a meridional arc and that of slope $\infty$ is termed as a longitudinal arc.
	By $T_\omega$ we denote the Dehn twist (refer \cite{farb}) about a standard non-separating curve $\omega$ on $\Sigma_2$.	
	Throughout this article, we follow the standard convention of function composition while writing the word for an automorphism in $\mathcal{H}_2$. For example $T_\omega T_\theta$ means we apply $T_\theta$ first and then $T_\omega$. 
	



\section{The elements in $G_2$}
	A set $S=\{\alpha,\beta,\gamma,\delta\}$ of generators of $\mathcal{H}_2$ has been described in \cite{scharlemann2003automorphisms}. $\alpha$ represents the involution of $\Sigma_2$, $\gamma$ captures the rotational symmetry of $\Sigma_2$ and  $\beta$ represents the half-twists about the standard reducing curve $c_P$ (see figure \ref{generate2}).
	\begin{figure}[h]
	\centering\includegraphics[width=\linewidth]{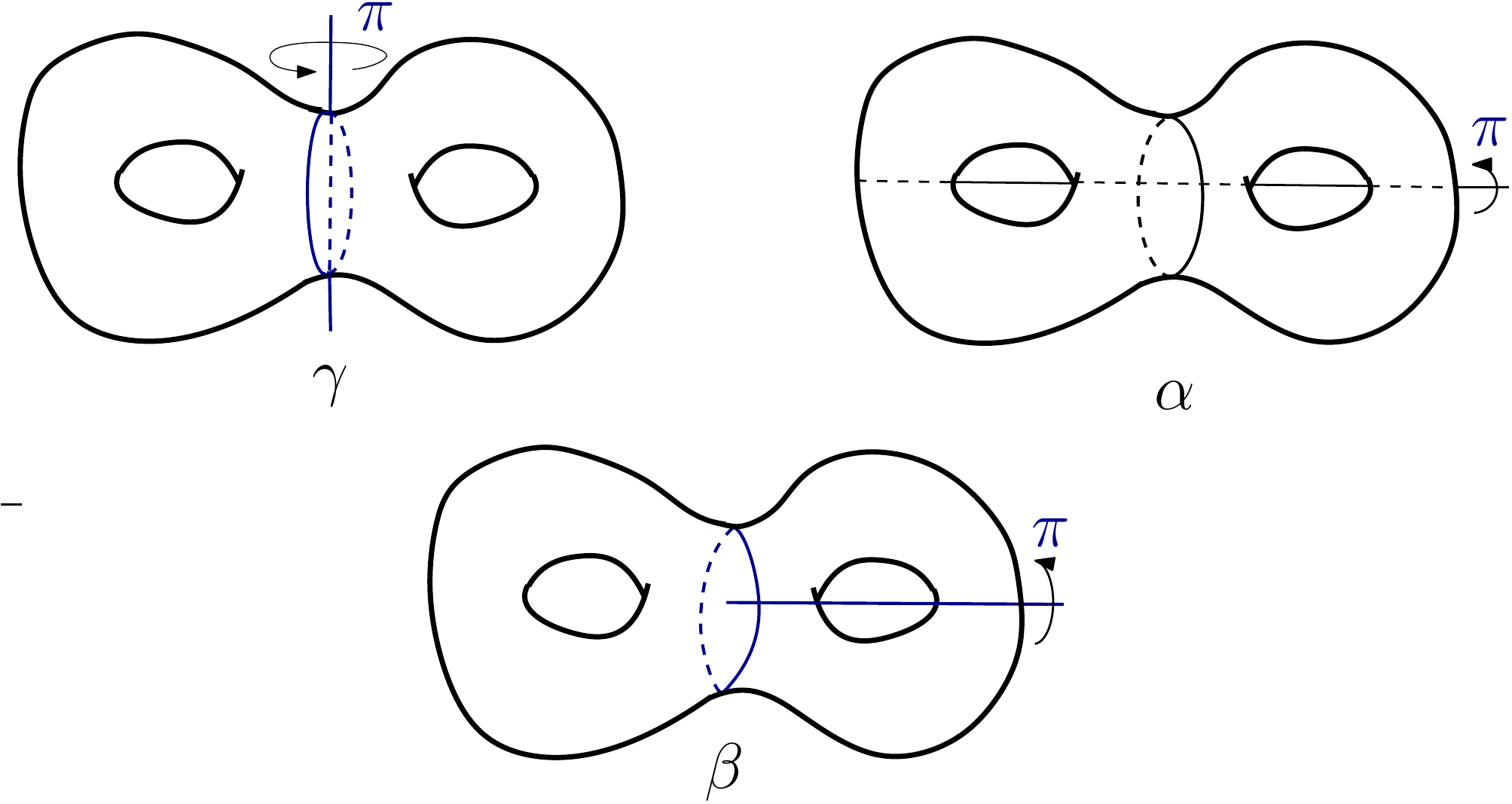}
	\caption{The automorphisms $\alpha,\gamma,\beta$ in $\mathcal{H}_2$}\label{generate2}
	\end{figure} 
	$\delta$ is an order $3$ automorphism as shown in figure \ref{generate2a}.  The automorphisms $\alpha,\beta$ and $\gamma$ keep the standard sphere invariant.
	
	Computations are easier using Dehn twists about non-separating curves, which generate the $\mathcal{MCG}(\Sigma_2)$, and so expressing automorphisms in $\mathcal{H}_2$ using these Dehn twists have a computational advantage.  With this in view, we replace $\gamma$ by an order two rotation $\nu$ (figure \ref{generate2a}) and also replace $\delta$ by $\varphi$.  We describe $\beta$ and $\varphi$ in terms of Dehn-twists about certain non-separating closed curves on $\Sigma_2$ so that we have a computationally simpler set of elements $G_2 = \{\alpha,\beta,\nu,\varphi\}$. We show that $G_2$ generates $\mathcal{H}_2$. We also write $\delta$ and $\gamma$ as words in the alphabet of $G_2$.

\begin{figure}[h]
\vspace*{8pt}
	\begin{minipage}{.49\linewidth}
	\centering\includegraphics[width=.7\linewidth]{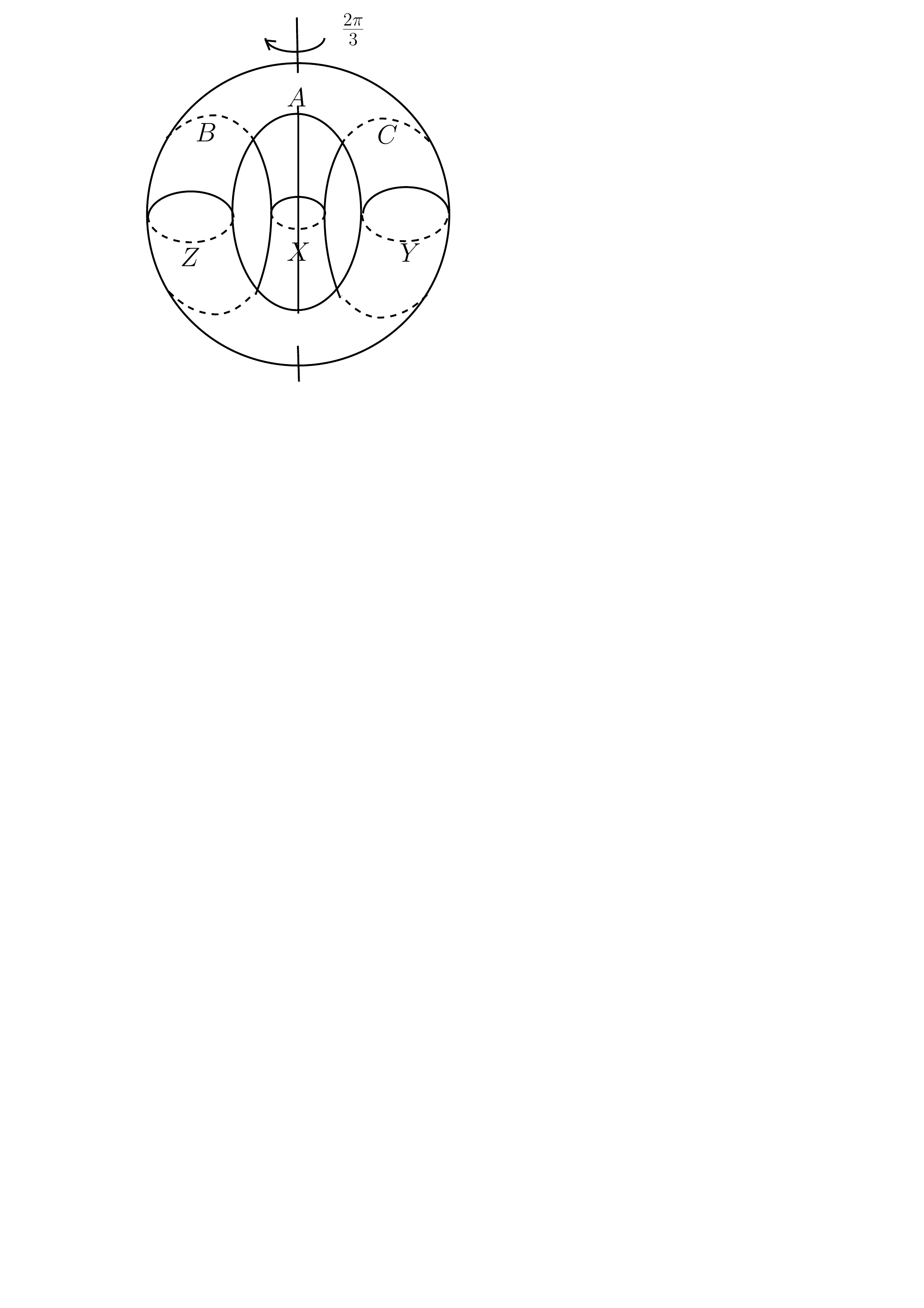}
	\end{minipage}
	\begin{minipage}{.49\linewidth}
	\centering\includegraphics[width=\linewidth]{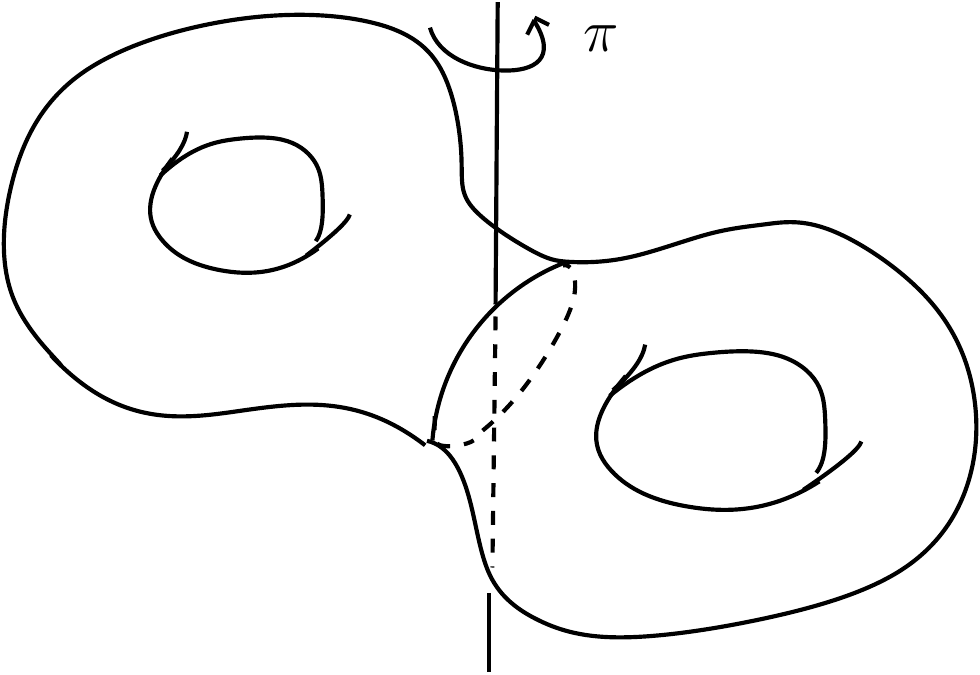}
	\end{minipage}
	\caption{Automorphisms $\delta$ and $\nu$}\label{generate2a}
\end{figure}	
		
\subsection{Automorphisms $\alpha$ and $\nu$}
From the description of $\alpha$ and $\nu$ it follows that
$$\alpha(A)=A, \alpha(B)=B, \alpha(C)=C, \alpha(\Sigma^\pm)=\Sigma^\pm, \alpha(\Sigma')=\Sigma'' \text{ and } \alpha(\Sigma'')=\Sigma'$$
and $$\nu(A)=A, \nu(B)=C, \nu(C)=B, \nu(\Sigma^\pm)=\Sigma^\mp, \nu(\Sigma')=\Sigma' \text{ and } \nu(\Sigma'')=\Sigma''.$$
But $$\gamma(A)=A, \gamma(B)=C, \gamma(C)=B, \gamma(\Sigma^\pm)=\Sigma^\mp, \gamma(\Sigma')=\Sigma'' \text{ and } \gamma(\Sigma'')=\Sigma'.$$
We note that $\gamma$ swaps $\Sigma_2'$ and $\Sigma_2''$ whereas $\nu$ leaves them invariant. 
\begin{lemma}[Properties of $\alpha$ and $\nu$]
	\begin{enumerate}[(i)]
		\item $\alpha^2 = \nu^2 = 1$.
		\item $\alpha\nu=\nu\alpha = \gamma$.
	\end{enumerate}
\end{lemma}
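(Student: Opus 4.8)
The plan is to verify both claims directly from the geometric descriptions of the automorphisms $\alpha$ and $\nu$, since both are explicitly realized as symmetries of the standard genus-2 surface $\Sigma_2$ sitting in $S^3$.

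For part (i), I would argue that $\alpha$ is an involution of $\Sigma_2$ (it is described in the text as \emph{the} involution), so applying it twice returns every point to itself, giving $\alpha^2=1$ as an element of $\mathcal{H}_2$; similarly $\nu$ is described as an order-two rotation, so $\nu^2=1$ by definition of having order two. The only subtlety is that these are identities in $\mathcal{H}_2$, i.e.\ up to isotopy fixing $\Sigma_2$, rather than as homeomorphisms on the nose. To make this rigorous I would check that the square of each map fixes the full system of curves $A,B,C$ (and hence each complementary piece $\Sigma',\Sigma''$ and each summand $\Sigma^\pm$): from the displayed action tables, $\alpha$ already fixes each of $A,B,C$ and fixes $\Sigma^\pm$, so $\alpha^2$ fixes all of these and is isotopic to the identity; for $\nu$, since $\nu$ swaps $B\leftrightarrow C$ and swaps $\Sigma^+\leftrightarrow\Sigma^-$, the composition $\nu^2$ fixes $A,B,C$ and each $\Sigma^\pm$, and one concludes $\nu^2=1$.

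For part (ii), I would compute the composite actions of $\alpha\nu$ and $\nu\alpha$ on the generating data $\{A,B,C,\Sigma^\pm,\Sigma',\Sigma''\}$ using the three displayed tables and compare with the displayed action of $\gamma$. Tracking each generator: $(\alpha\nu)(A)=\alpha(\nu(A))=\alpha(A)=A$, $(\alpha\nu)(B)=\alpha(C)=C$, $(\alpha\nu)(C)=\alpha(B)=B$, $(\alpha\nu)(\Sigma^\pm)=\alpha(\Sigma^\mp)=\Sigma^\mp$, and for the thrice-bounded pieces $(\alpha\nu)(\Sigma')=\alpha(\Sigma')=\Sigma''$ and $(\alpha\nu)(\Sigma'')=\alpha(\Sigma'')=\Sigma'$. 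These coincide exactly with the displayed action of $\gamma$. I would then do the symmetric computation for $\nu\alpha$ and observe it yields the same action; in particular $\alpha$ and $\nu$ commute on this curve data. This shows $\alpha\nu$, $\nu\alpha$, and $\gamma$ all act identically on a system sufficient to determine the mapping class.

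The main obstacle, and the point deserving the most care, is the passage from ``agreement on the curve system $A,B,C$'' to ``equality as elements of $\mathcal{H}_2$.'' Agreeing on how one permutes $A,B,C$ and the complementary subsurfaces does not by itself pin down a mapping class, so I would need to invoke the rigidity of these symmetric maps—namely, that an element of $\mathcal{H}_2$ realized as a finite-order isometry of the standard picture is determined by its action on this curve data, or else supplement the curve-tracking with the observation that all three maps are the \emph{same} geometric rotation/reflection of the standard embedding and hence literally equal before passing to isotopy. I expect the cleanest route is the latter: exhibit $\gamma$, $\alpha\nu$, and $\nu\alpha$ as the identical rigid symmetry of $(\Sigma_2,S^3)$ directly from Figures \ref{generate2} and \ref{generate2a}, so that the curve computation above serves only as a confirming check rather than as the sole justification.
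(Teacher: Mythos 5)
Your proposal is correct and is in essence the paper's argument, but there are two differences worth noting. First, you verify the composite actions only on $A,B,C$ and the complementary subsurfaces and then (rightly) flag that this data does not determine a mapping class --- a Dehn twist about $A$, for instance, fixes all of it. The paper closes exactly this gap not by your ``same rigid symmetry'' route but by checking that $\gamma^{-1}\alpha\nu$ fixes the \emph{full} system $A,B,C,X,Y,Z$ together with $\Sigma_2'$ and $\Sigma_2''$ (Figure \ref{gamma-nu}); since these six curves fill $\Sigma_2$, this is the Alexander-method criterion the paper uses throughout to certify that a map is isotopic to the identity. Your alternative --- observing that $\gamma$, $\alpha\nu$, $\nu\alpha$ are literally the same rigid symmetry of the standard embedding --- is also sound and arguably more conceptual, but if you go the curve-checking route you must enlarge your curve system to include $X,Y,Z$. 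Second, the paper does not recompute the action of $\nu\alpha$ separately: having shown $\alpha\nu=\gamma$, it derives commutativity algebraically from $\gamma^2=(\alpha\nu)^2=1=\alpha^2\nu^2$, cancelling to get $\nu\alpha=\alpha\nu$. Your direct second computation works just as well but duplicates effort; the algebraic shortcut is a small economy you could adopt.
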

\begin{proof}
	The first part is immediate from the description of $\alpha$ and $\nu$. For the second part, figure \ref{gamma-nu} shows that $\gamma^{-1}\alpha\nu$ fixes the curves $A,B,C,X,Y,Z$ and also preserves $\Sigma_2'$ and $\Sigma_2''$. Therefore $\gamma^{-1}\alpha\nu$ is identity. Now $\gamma^2 = (\alpha\nu)^2 = 1 = \alpha^2\nu^2$ by $(i)$. So $\alpha\nu=\nu\alpha = \gamma$.
	\begin{figure}
	\centering\includegraphics[width=\linewidth]{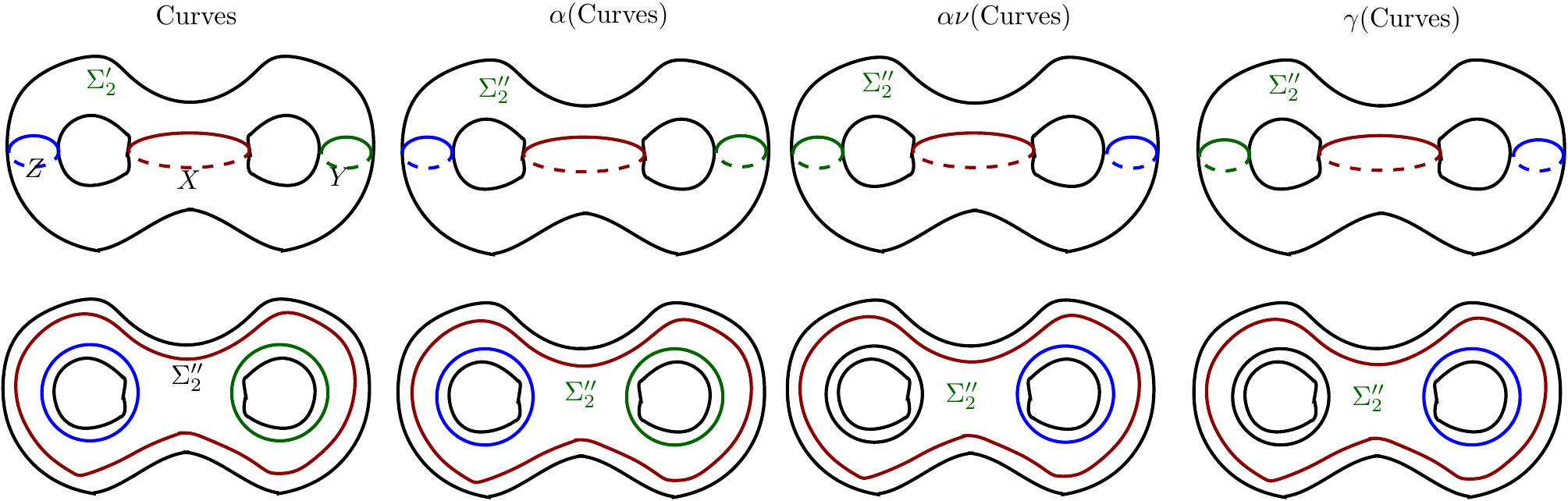}
	\caption{$\alpha\nu = \gamma$.}\label{gamma-nu}
	\end{figure}
\end{proof}

\subsection{Automorphism $\beta$}
	
	$\beta$ is a half twist about the standard reducing curve $c_P$. Using Dehn twists about $Y$ and $C$ (figure \ref{gen2-loops}), we can express $\beta$ as
	\[\beta = (T_{C}T_{Y})^3 = T_{C}T_{Y}T_{C}T_{Y}T_{C}T_{Y},\]
	This word-presentation is not unique. For example, using the braid relation, we can also express $\beta$ as
	\[\beta = (T_CT_YT_C)^2 = (T_YT_CT_Y)^2.\]
	Figure \ref{betaX} illustrates the computations of the application of $\beta$ on $A$ and $X$. Note that $(T_CT_YT_C)$ exchanges $Y$ and $C$. So they are invariant under $\beta$.
	\begin{figure}[h]
		\centering\centering\includegraphics[width=\linewidth]{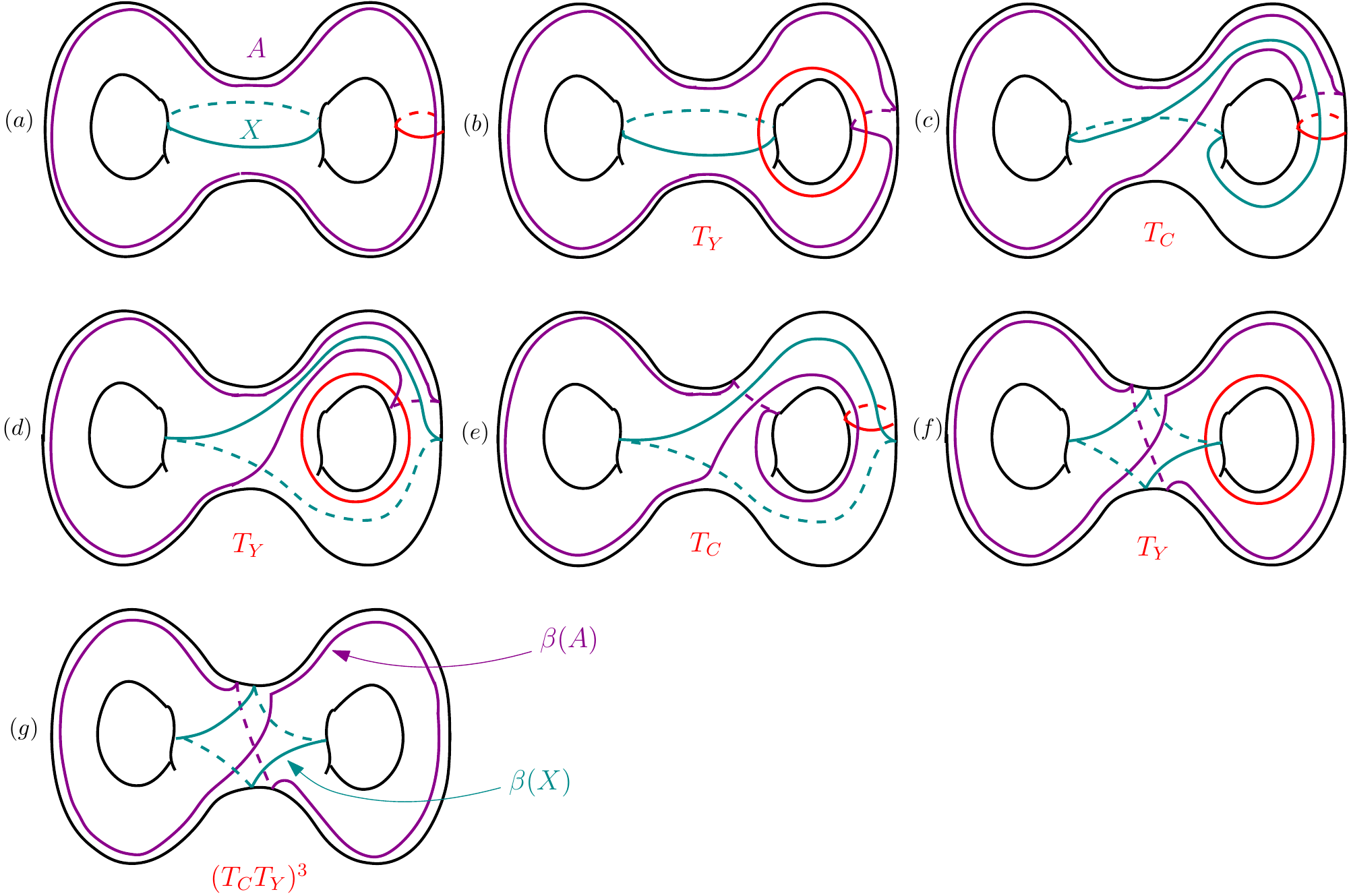}
		\caption{Computation of $\beta(X)$}\label{betaX}
	\end{figure}
	\begin{figure}[h]
		\centering\centering\includegraphics[width=.8\linewidth]{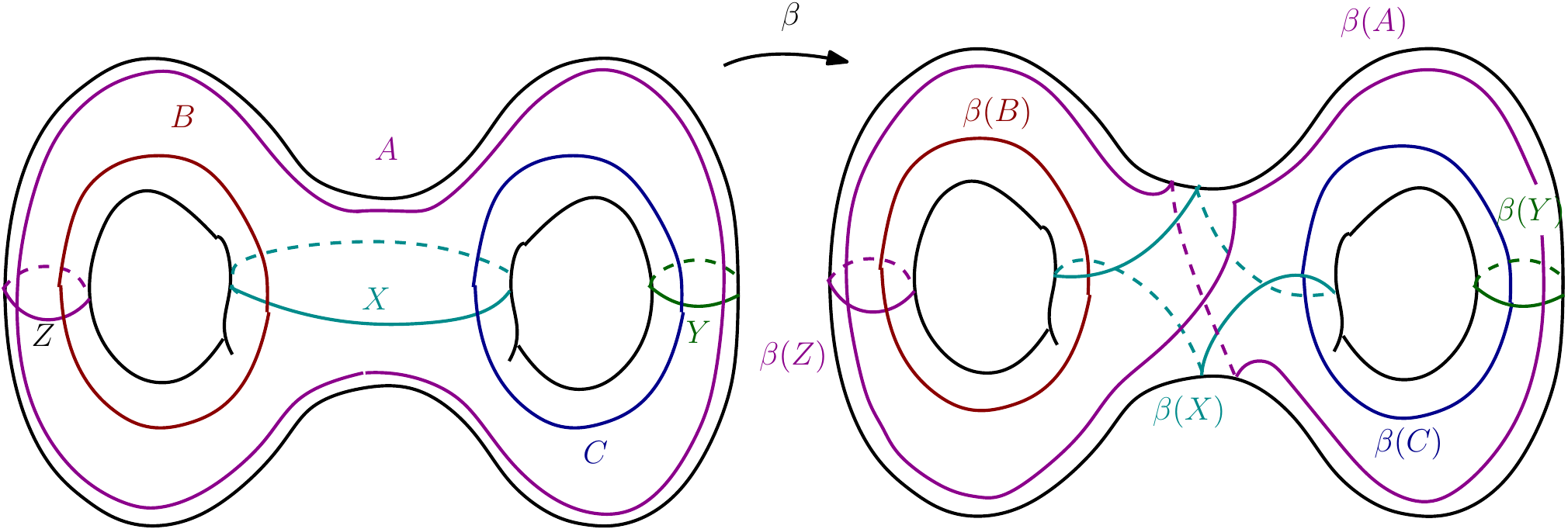}
		\caption{Action of $\beta$ on $\Sigma_2$}\label{beta2-act}
	\end{figure}
	Since this $\beta^{-1}$ composed with the half-twist discussed in \cite{scharlemann2003automorphisms} fixes all the essential non-separating loops $A,B,C,X,Y$ and $Z$ along with $\Sigma_2'$ and $\Sigma_2''$, the composition is identity on $\Sigma_2$. Therefore $\beta\in\mathcal{H}_2$ and is indeed the half-twist.

	Now from figure \ref{beta2-act}, one can observe that $\beta$ leaves $\Sigma_2^\pm$ invariant and only increases or reduces the intersection of $c_Q$ with $A$ in a collar neighbourhood of $c_P$ and at the same time introduces or gets rid of arcs in that region.
	
	\begin{lemma}\label{beta-prop}
		$\beta$ exhibits the following properties:
		\begin{enumerate}[(i)]
			\item Order of $\beta$ is infinite.
			\item $\beta$ commutes with $\alpha$ and $\nu$.
		\end{enumerate}
	\end{lemma}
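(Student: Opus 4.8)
The plan is to establish each of the two properties of $\beta$ separately, relying on the explicit Dehn-twist description $\beta=(T_CT_Y)^3$ and the observation, recorded just before the lemma, that $\beta$ leaves $\Sigma_2^\pm$ invariant while only changing the intersection of a reducing curve with $A$ in a collar of $c_P$.

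For part $(i)$, the claim is that $\beta$ has infinite order. The natural approach is to exhibit a curve whose images under the iterates $\beta^n$ are pairwise non-isotopic, which forces $\beta^n\neq\mathrm{id}$ for all $n\neq 0$. The preceding discussion already tells us that $\beta$ acts on a reducing curve $c_Q$ by increasing or decreasing its geometric intersection number with $A$ in the collar neighbourhood of $c_P$, so I would track the curve $X$ (or $A$), whose image $\beta(X)$ is computed in Figure \ref{betaX}. Concretely, one shows that the intersection number $\beta^n(X)\cdot A$ (or the analogous count with one of the standard curves) grows without bound, or at least is strictly monotone in $n$; since the geometric intersection number is an isotopy invariant, the $\beta^n(X)$ are all distinct and $\beta$ cannot be of finite order. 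Because $\beta$ is, up to sign, a power of the half-twist about the separating curve $c_P$ — and a half-twist about an essential separating curve is an infinite-order element of $\mathcal{MCG}(\Sigma_2)$ — this is really just the standard fact that Dehn twists (and half-twists) about essential curves have infinite order, which I would cite from \cite{farb}.

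For part $(ii)$, I want to show $\beta$ commutes with both $\alpha$ and $\nu$. The cleanest route is the change-of-coordinates principle for mapping class groups: for any homeomorphism $h$ and any curve $\omega$, one has $hT_\omega h^{-1}=T_{h(\omega)}^{\pm}$, with the sign determined by whether $h$ preserves or reverses orientation. Since $\alpha$ and $\nu$ are orientation-preserving (they lie in $\mathcal{H}_2$), conjugating the word $(T_CT_Y)^3$ by $\alpha$ or by $\nu$ yields the corresponding word in the twists about the images of $C$ and $Y$. From the explicit action recorded in Section 3 we have $\alpha(C)=C$ and $\nu(C)=B$, together with the images of $Y$; so I would check that $\alpha$ fixes both $C$ and $Y$ (hence fixes $\beta$ by conjugation), and for $\nu$ that, although $\nu$ may move $C$ and $Y$ individually, the ordered product $(T_CT_Y)^3$ — or equivalently the half-twist it represents — is sent to itself because $\nu$ preserves the separating curve $c_P$ about which $\beta$ is a half-twist. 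Indeed the most conceptual argument is: $\beta$ is (a power of) the half-twist about $c_P$, and since $\alpha$ and $\nu$ both map $c_P$ to itself preservingly (from the relations $\alpha(\Sigma^\pm)=\Sigma^\pm$, $\nu(\Sigma^\pm)=\Sigma^\mp$, both of which fix the separating curve $c_P$ setwise), the conjugate of this half-twist by either is again the same half-twist, giving commutativity.

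I expect the main obstacle to be part $(ii)$ for $\nu$: unlike $\alpha$, the automorphism $\nu$ swaps the two genus-one summands $\Sigma^+$ and $\Sigma^-$ and does not fix $C$ and $Y$ individually (indeed $\nu(C)=B$), so the naive term-by-term conjugation of $(T_CT_Y)^3$ does not visibly return the same word, and one must argue at the level of the half-twist rather than the literal Dehn-twist expression. The care needed is to verify that $\nu$ restricted to a collar of $c_P$ commutes with the half-twist supported there — equivalently that $\nu$ and $\beta$ have disjoint-enough supports or interact symmetrically — which I would make precise by noting that $\nu$ preserves the isotopy class of the half-twist because it preserves $c_P$ with its framing. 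Verifying the orientation of this interaction (so that one gets $\nu\beta\nu^{-1}=\beta$ rather than $\beta^{-1}$) is the one delicate point, and I would settle it by inspecting the action on a single test curve such as $X$ using the computation already displayed in Figure \ref{beta2-act}.
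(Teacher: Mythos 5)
Your treatment of part (i) is correct and is essentially the paper's argument: the paper likewise deduces infinite order from the strict growth of $\beta^{n}(X)\cdot A$, and your appeal to the general fact that (half-)twists about essential curves have infinite order is a valid alternative. Your handling of the $\alpha$ case in part (ii) is also fine and, if anything, cleaner than the paper's: $\alpha$ is the hyperelliptic involution, hence central in $\mathcal{MCG}(\Sigma_2)$, and in any case $\alpha(C)=C$, $\alpha(Y)=Y$ gives $\alpha(T_CT_Y)^3\alpha^{-1}=(T_CT_Y)^3$ directly from the conjugation formula.

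The gap is in the $\nu$ case, precisely at the point you flag as ``delicate,'' and your proposed repair does not close it. Your key claim is that because $\nu$ preserves $c_P$, conjugation by $\nu$ must return the same half-twist up to a sign that can be settled by testing on a curve such as $X$. But a half-twist about $c_P$ is not determined by $c_P$: the element $\beta=(T_CT_Y)^3$ is supported in the genus-one summand spanned by $C$ and $Y$, while $\nu\beta\nu^{-1}=(T_{\nu(C)}T_{\nu(Y)})^3=(T_BT_{\nu(Y)})^3$ is supported in the opposite summand, and these are genuinely different square roots of $T_{c_P}$. One sees this on homology: $(T_CT_Y)^3$ acts as $-I$ on $\langle [C],[Y]\rangle$ and as $+I$ on $\langle [B],[Z]\rangle$, whereas its $\nu$-conjugate does the opposite, so $\nu\beta\nu^{-1}[B]=-[B]\neq [B]=\beta^{\pm1}[B]$. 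The discrepancy $\beta^{-1}\nu\beta\nu^{-1}$ acts as $-I$ on $H_1(\Sigma_2)$; it is the central hyperelliptic involution $\alpha$ (up to a power of $T_{c_P}=\beta^2$), so the correct relation has the shape $\nu\beta\nu^{-1}=\alpha\beta^{\pm1}$ rather than $\nu\beta\nu^{-1}=\beta$. Crucially, your proposed verification on a test curve cannot detect this, because $\alpha$ fixes the unoriented isotopy class of \emph{every} simple closed curve; one must track orientations, or arcs, or the homology action.

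For context, the paper's own proof takes a different route from yours --- it checks directly that $\nu\beta\nu\beta^{-1}$ fixes the six standard curves and the two pairs of pants and concludes it is the identity --- but that argument is subject to exactly the same caveat: fixing those unoriented curve classes does not distinguish the identity from $\alpha$, and the homology computation above shows the commutator is in fact $\alpha$ times a power of $\beta^2$. So the commutativity of $\beta$ with $\nu$ (equivalently with $\gamma$) should be asserted only modulo the central element $\alpha$, and any argument for part (ii) --- yours or a corrected version of the paper's --- needs to resolve this ambiguity by a method sensitive to $\alpha$.
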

	\begin{proof}
		The first part naturally follows from the fact that $\beta^{n+1}(X)\cdot A>\beta^{n}(X)\cdot A$, for all $n\in \mathbb{N}$.
		
		For the second part, it is easy to verify that both $\nu\beta\nu\beta^{-1}$ and $\alpha\beta\alpha\beta^{-1}$ fix $A,B,C,X,Y$ and $Z$ along with $\Sigma_2'$ and $\Sigma_2''$. Therefore, both are identity in $\mathcal{MCG}(\Sigma_2)$ and so the result follows.
	\end{proof}


\subsection{Automorphism $\varphi$} 
	$\varphi$ can be described as
	$$\varphi = T_{Z}^{-1}T_{Y}T_{C}T_{Y}T_XT_{C}T_{Y}.$$
	The effect of $\varphi$ on the standard loops on $\Sigma_2$ is shown in figure \ref{phi2-act}.
	\begin{figure}[h!]
		\centering\includegraphics[width=\linewidth]{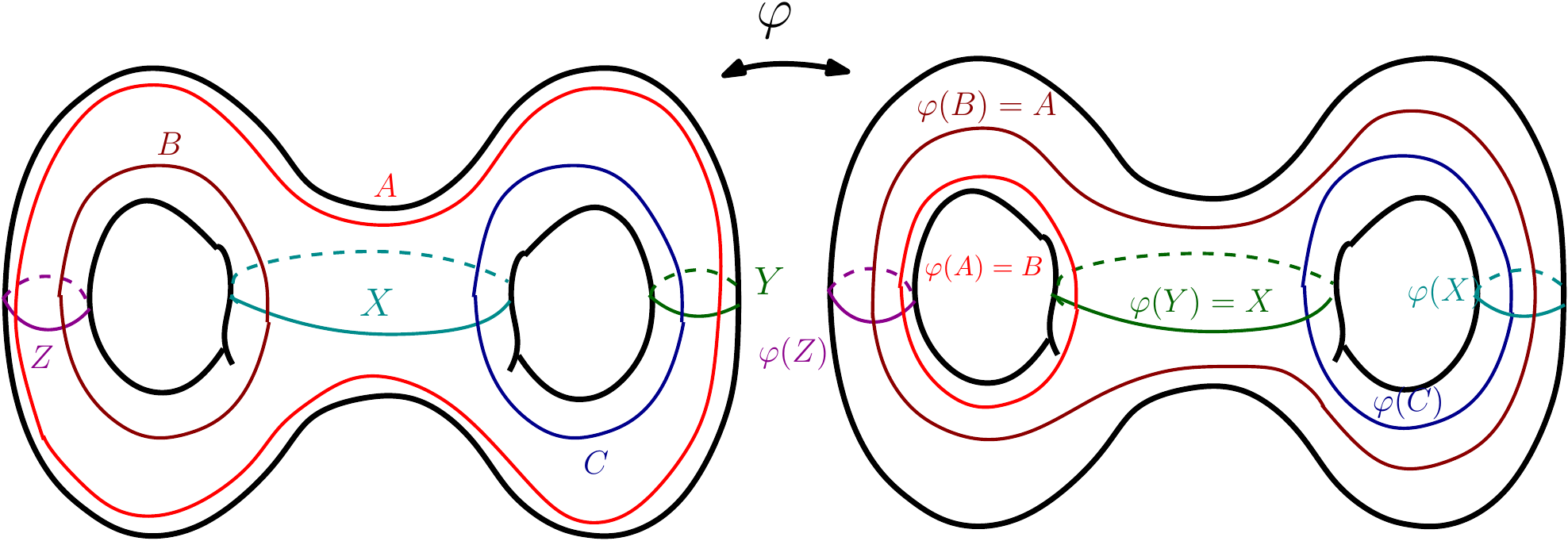}
		\caption{Action of $\varphi$ on $\Sigma_2$}\label{phi2-act}
	\end{figure} 
	
	$\varphi$ exchanges the loops $Y$ and $X$ but leaves $C$ and $Z$ invariant. From the action of $\varphi$ on curves in figure \ref{phi2-act}, we give the following lemma:
	
	\begin{lemma}\label{phi-prop}
	The automorphism $\varphi$ satisfies the following:
	\begin{enumerate}[(i)]		
		\item $\varphi(A)=B$, $\varphi(B)=A$, $\varphi(C)=C$, $\varphi(X)=Y$, $\varphi(Y)=X$, $\varphi(Z)=Z$, $\varphi(\Sigma_2')=\Sigma_2'$ and $\varphi(\Sigma_2'')=\Sigma_2''$. So, $\varphi^2=1$.
		\item $\varphi \in \mathcal{H}_2$.
		\item $\varphi$ commutes with $\alpha$.
	\end{enumerate}
	\end{lemma}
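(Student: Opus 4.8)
The plan is to treat part (i) as the computational core and then obtain (ii) and (iii) as formal consequences. For (i), I would verify each of the six stated images by tracking the isotopy class of the relevant curve through the seven-letter word $\varphi = T_Z^{-1}T_Y T_C T_Y T_X T_C T_Y$, applying the Dehn twists in the order dictated by the composition convention, i.e.\ right to left. Concretely, to compute $\varphi(X)$ I would apply the twists in the order $T_Y, T_C, T_X, T_Y, T_C, T_Y, T_Z^{-1}$, redrawing the curve after each step and checking that the net effect agrees with Figure \ref{phi2-act}; the images $\varphi(A), \varphi(B), \varphi(C), \varphi(Z)$ are obtained the same way. Since the standard curves are disjoint from most of the twisting curves, the bookkeeping largely reduces to counting how each twist changes the geometric intersection with $X, Y, C, Z$. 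Once the six images are established, $\varphi^2 = 1$ follows without further computation: $\varphi$ swaps $A \leftrightarrow B$ and $X \leftrightarrow Y$ and fixes $C$ and $Z$, so $\varphi^2$ fixes each of $A,B,C,X,Y,Z$ and preserves $\Sigma_2'$ and $\Sigma_2''$; as this collection fills $\Sigma_2$, the Alexander method (see \cite{farb}) forces $\varphi^2 = 1$ in $\mathcal{MCG}(\Sigma_2)$.

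For part (ii), I would show that $\varphi$ extends over both handlebodies $V_2$ and $W_2$, which is precisely the criterion for membership in $\mathcal{H}_2$. The strategy is to single out complete meridian systems for $V_2$ and for $W_2$ from among the standard curves, and then use the images computed in (i) to verify that each disk-bounding curve is carried to a disk-bounding curve in the same handlebody. Since $\varphi$ merely permutes the standard curves and preserves each pants $\Sigma_2', \Sigma_2''$, each meridian system is sent to an isotopic meridian system, so $\varphi$ respects both handlebody structures and hence lies in $\mathcal{H}_2$. Alternatively, mirroring the argument used for $\beta$, one could exhibit a known element of $\mathcal{H}_2$ (assembled from $\delta$ and the already-verified generators) whose composition with $\varphi$ fixes all standard curves and both pants, and then invoke the Alexander method.

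For part (iii), I would use the commutator trick already employed in the proof of Lemma \ref{beta-prop}. Since $\alpha^2 = \varphi^2 = 1$, it suffices to show that $\alpha\varphi\alpha\varphi$ is the identity. Using the known action of $\alpha$ on the standard curves, namely $\alpha(A)=A$, $\alpha(B)=B$, $\alpha(C)=C$ together with its effect on $X,Y,Z$, and the images from (i), I would check that $\alpha\varphi\alpha\varphi$ fixes each of $A,B,C,X,Y,Z$ and preserves $\Sigma_2'$ and $\Sigma_2''$; the Alexander method then yields $\alpha\varphi\alpha\varphi = 1$, i.e.\ $\alpha\varphi = \varphi\alpha$.

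The main obstacle is the explicit seven-twist computation in part (i): each application of a Dehn twist must be tracked with the correct orientation while maintaining minimal, transverse intersections throughout, and a single arithmetic slip in the intersection count would propagate through the whole argument. By contrast, parts (ii) and (iii) are essentially formal once (i) is secured, since both reduce to the now-routine principle that a mapping class fixing the filling system $\{A,B,C,X,Y,Z\}$ and preserving $\Sigma_2'$ and $\Sigma_2''$ must be trivial.
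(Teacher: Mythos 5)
Your parts (i) and (iii) track the paper's proof almost exactly: for (i) the paper performs the same curve-by-curve computation through the word $T_Z^{-1}T_YT_CT_YT_XT_CT_Y$ (Figure \ref{phi-comp} does $\varphi(A)=B$ and the rest is "similar"), and deduces $\varphi^2=1$ from the fact that $\varphi^2$ fixes the filling system $A,B,C,X,Y,Z$ and preserves $\Sigma_2'$, $\Sigma_2''$; for (iii) the paper writes only "clear from (i)", which is your commutator check made explicit. The genuine divergence is in (ii). The paper does \emph{not} verify directly that $\varphi$ extends over the handlebodies; instead it imports the eyeglass move $\varphi_\theta$ of \cite{zupan2019powell} as a known element of $\mathcal{H}_2$, checks pictorially that $\varphi^{-1}(\beta^{-1}\varphi_\theta)$ fixes $A,B,C,X,Y,Z,\Sigma_2',\Sigma_2''$, and concludes $\varphi=\beta^{-1}\varphi_\theta\in\mathcal{H}_2$ --- which is precisely your ``alternative'' suggestion, with the reference element taken from Zupan rather than assembled from $\delta$. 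Your primary route --- $\varphi$ permutes $\{A,B,C\}$ and $\{X,Y,Z\}$, which contain complete meridian systems for $V_2$ and $W_2$ respectively, so $\varphi$ extends over both handlebodies --- is correct and arguably preferable: it is self-contained and avoids reliance on an external figure, at the cost of having to record the standard fact that each of $A,B,C$ bounds a disk in $V_2$ and each of $X,Y,Z$ bounds a disk in $W_2$ for the standard embedding. One shared caveat (applying equally to the paper's own arguments): on a genus-$2$ surface the hyperelliptic involution fixes the isotopy class of \emph{every} simple closed curve, so ``fixes $A,\dots,Z$ and preserves $\Sigma_2',\Sigma_2''$'' pins down a mapping class only up to that involution; a careful invocation of the Alexander method should also track orientations or the sides of the curves to exclude it.
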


\begin{proof}
	\begin{enumerate}[(i)]
		\item Figure \ref{phi-comp} demonstrates the verification of $\varphi(A)=B$.
		\begin{figure}[h]
			\centering\includegraphics[width=14cm]{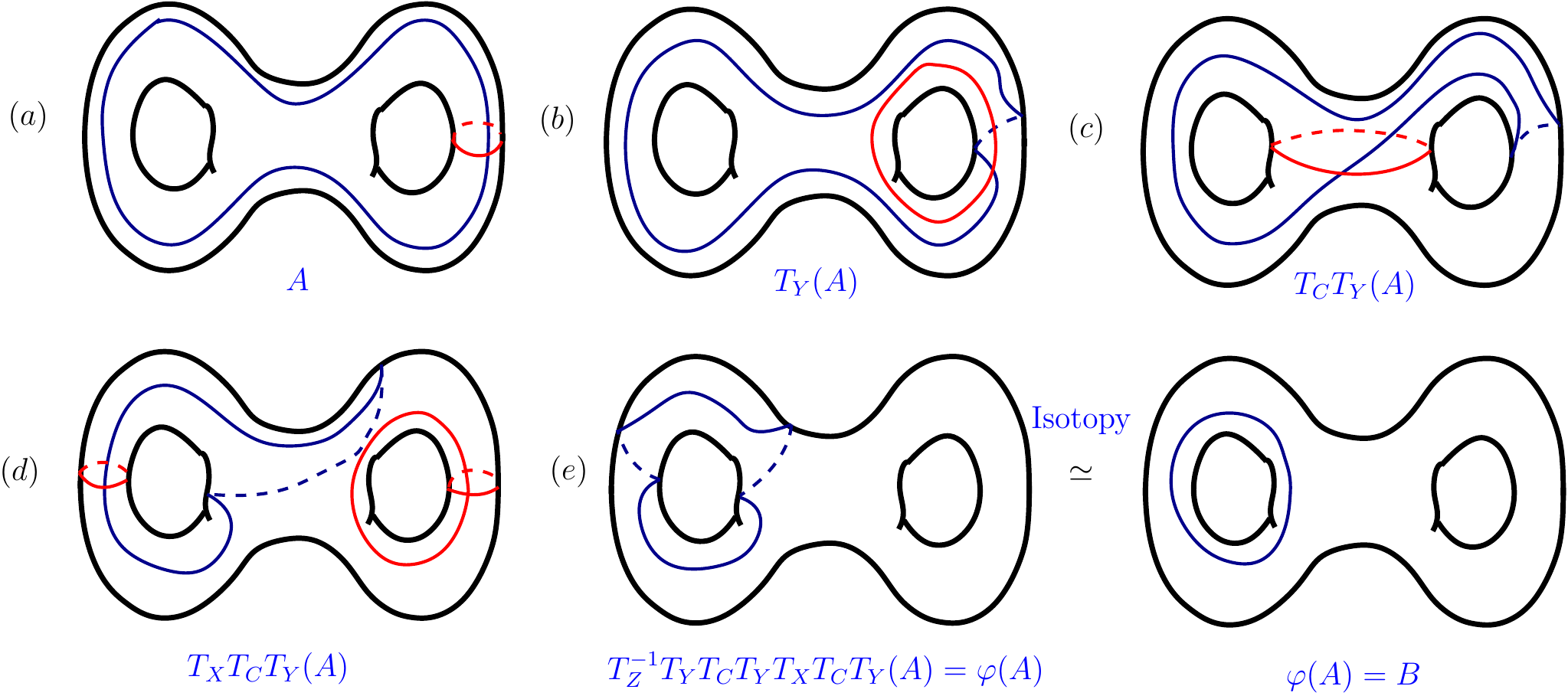}
			\caption{Computation of $\varphi(A)$}\label{phi-comp}
		\end{figure}
	By computing in a similar manner, one can verify that the first result follows from figure \ref{phi2-act}. From this it follows that $\varphi^2$ fixes  $A,B,C,X,Y,Z, \Sigma_2'$ and $\Sigma_2''$ (refer figure \ref{gen2-loops}) on $\Sigma_2$. Therefore $\varphi^2\simeq 1$. 
		\item Now consider the eyeglass move $\varphi_\theta$ in \cite{zupan2019powell} for $\Sigma_2$.
		\begin{figure}[h]
			\centering\includegraphics[width=12cm]{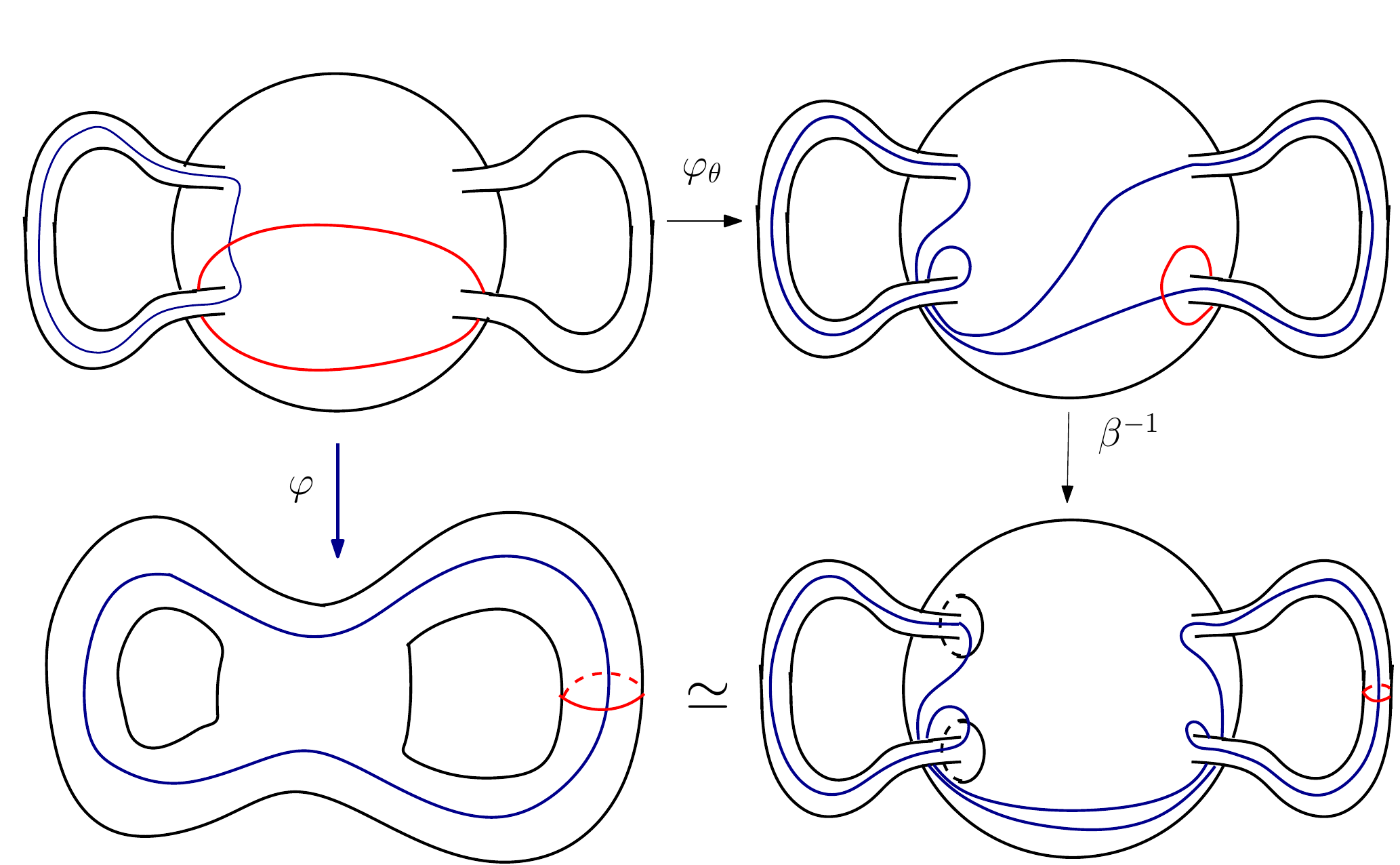}
			\centering\includegraphics[width=12cm]{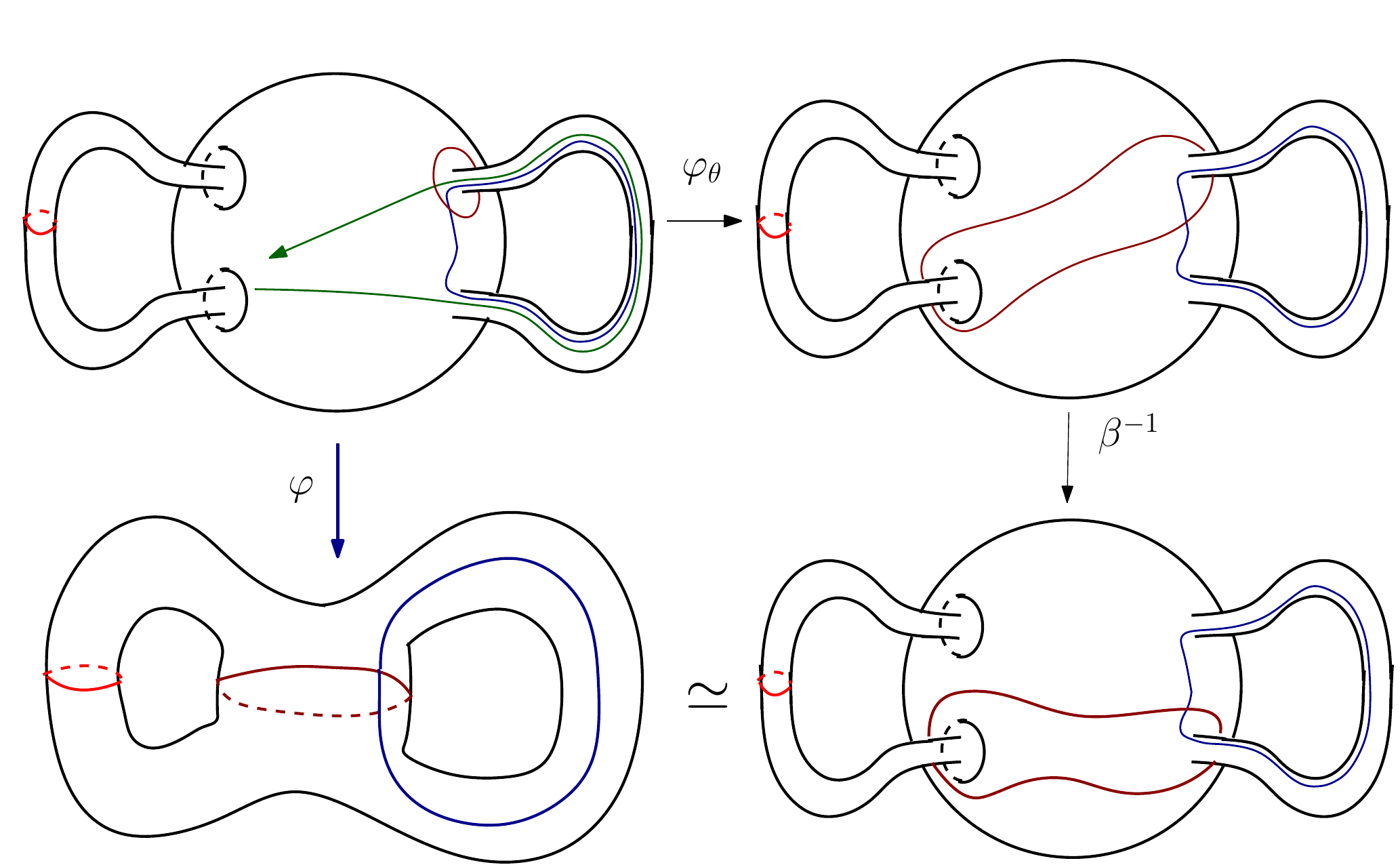}
			\caption{Automorphism $\varphi$ comparision with eyeglass move }\label{eye-phi}
		\end{figure}
		Figure \ref{eye-phi} demonstrates that $\varphi^{-1}(\beta^{-1}\varphi_\theta)$ fixes $A,B,C,X,Y,Z, \Sigma_2'$ and $\Sigma_2''$ (refer figure \ref{gen2-loops}) on $\Sigma_2$. Therefore, $\varphi^{-1}(\beta^{-1}\varphi_\theta)$ is isotopic to identity on $\Sigma_2$. This implies $\beta^{-1}{\varphi_\theta} = \varphi$ on $\Sigma_2$. Hence the result. 
		\item This is clear from (i).
	\end{enumerate}
\end{proof}

Since $\varphi$ keeps $\Sigma_2'$ and $\Sigma_2''$ invariant, we can discuss the action of $\varphi$ on the arc types mentioned in Table \ref{arc-table}. The schematic in figure \ref{phi2-schem} illustrates the action of $\varphi$ on these arc types.
 	\begin{figure}[h!]
 		\centering\centering\includegraphics[width=\linewidth]{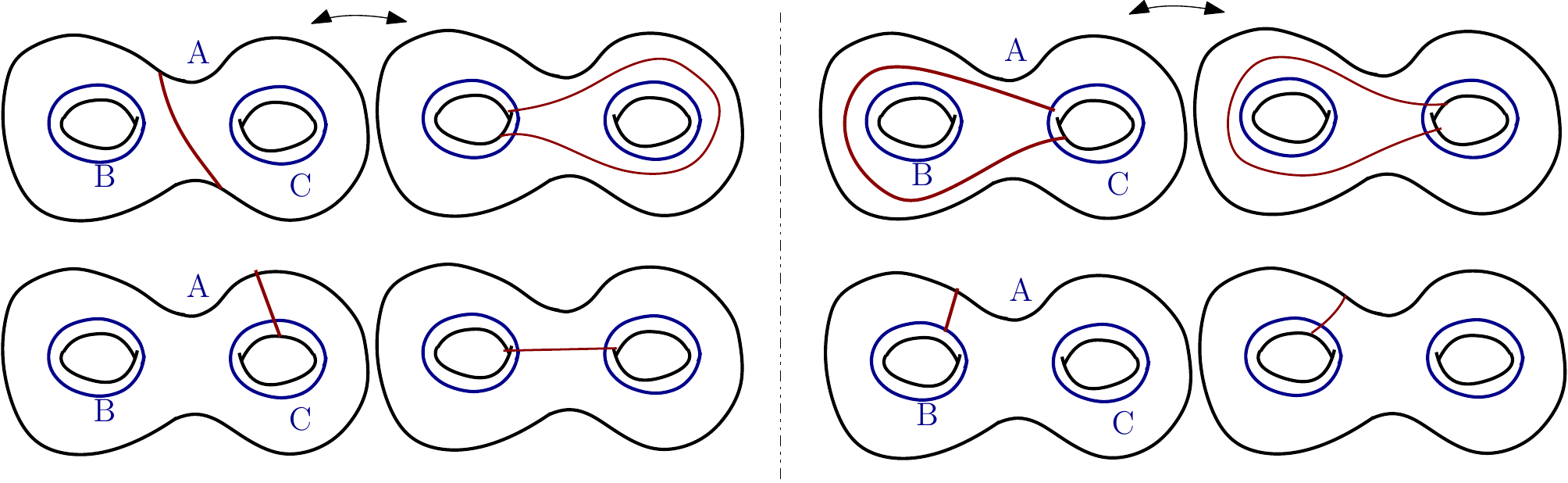}
 		\caption{Schematic presentation of action of $\varphi$ on arcs in $\Sigma_2'$}\label{phi2-schem}
 	\end{figure} 
 	

\section{Complexity and Reduction algorithm}

In this section we give an algorithm that will provide us a word from the alphabet of the set $G_2$ that maps an arbitrary reducing sphere to the standard one. 
Consider the setup described in section \ref{setup} of the genus two Heegaard splitting of $S^3$.
Let $Q$ be a reducing sphere and let $c_Q$ be the corresponding reducing curve on $\Sigma_2$. Denote $c_Q\cdot A$,  $c_Q\cdot B$ and $c_Q\cdot C$ by $n_{AQ}$, $n_{BQ}$ and $n_{CQ}$ respectively. Since $c_Q$ is a separating closed curve on $\Sigma_2$, $n_{AQ}$, $n_{BQ}$ and $n_{CQ}$  must be even numbers.
First, for $Q$, we introduce a measure $\mathcal{C}(Q)$ defined as
	\[\mathcal{C}(Q) = \frac{1}{2} (n_{AQ}) + n_{BQ} + n_{CQ}\]
The following is the motivation for this measure.
\begin{lemma}
	$\mathcal{C}(Q)=1$ if and only if $Q=P$.
\end{lemma}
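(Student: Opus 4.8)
The plan is to prove both implications, with essentially all of the content lying in the converse. The forward direction is a direct reading of the standard picture: in Figure \ref{gen2-loops} the curve $c_P$ meets $A$ transversely in two points and is disjoint from $B$ and $C$, so that $n_{AP}=2$ and $n_{BP}=n_{CP}=0$, giving $\mathcal{C}(P)=\frac{1}{2}(2)+0+0=1$.

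For the converse I would first exploit the parity constraint. Since $n_{AQ},n_{BQ},n_{CQ}$ are all even (as already noted, because $c_Q$ is separating) and nonnegative, the equation $\frac{1}{2}n_{AQ}+n_{BQ}+n_{CQ}=1$ forces $n_{BQ}=n_{CQ}=0$, because any nonzero even value of $n_{BQ}$ or $n_{CQ}$ is at least $2$ and would already make $\mathcal{C}(Q)\geq 2$. This leaves $\frac{1}{2}n_{AQ}=1$, i.e. $n_{AQ}=2$. Thus the only intersection profile compatible with $\mathcal{C}(Q)=1$ is $(n_{AQ},n_{BQ},n_{CQ})=(2,0,0)$.

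Next I would translate this profile into the arc language of Section \ref{setup}. Because $c_Q$ is disjoint from $B$ and $C$ and crosses $A$ exactly twice, the two points of $c_Q\cap A$ split $c_Q$ into one arc lying in $\Sigma_2'$ and one arc lying in $\Sigma_2''$, and both are essential arcs of type $(a,a)$. In a thrice-bordered sphere, an essential properly embedded arc with both endpoints on a single boundary curve $A$ is unique up to isotopy: it is the arc separating the two remaining boundary curves $B$ and $C$. Gluing the unique such arc in $\Sigma_2'$ to the unique such arc in $\Sigma_2''$ along the two points of $c_Q\cap A$ therefore determines $c_Q$ up to isotopy, and the resulting curve is precisely $c_P$. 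Hence $c_Q$ is isotopic to $c_P$.

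Finally I would upgrade equality of reducing curves to equality of reducing spheres: a reducing sphere is recovered from its reducing curve by capping the curve with the separating meridian disks of the genus-two handlebodies $V_2$ and $W_2$, and each such separating disk is unique up to isotopy in its handlebody, so isotopic reducing curves yield isotopic reducing spheres. This gives $Q=P$. I expect the only genuine subtlety to be the arc step above, namely arguing that the two $(a,a)$ arcs must assemble into a single separating curve isotopic to $c_P$ rather than some other configuration; the uniqueness of the essential $(a,a)$ arc in a pair of pants does the real work there, while the passage from curve to sphere is then a standard uniqueness-of-disks argument.
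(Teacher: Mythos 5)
Your proof is correct and follows essentially the same route as the paper's: parity forces the profile $(n_{AQ},n_{BQ},n_{CQ})=(2,0,0)$, whence each of $c_Q\cap\Sigma_2'$ and $c_Q\cap\Sigma_2''$ is a single essential $(a,a)$ arc and $c_Q$ is isotopic to $c_P$; you simply make explicit the pair-of-pants arc uniqueness and the curve-to-sphere step that the paper leaves implicit. The one point both arguments gloss over is that the two arcs could a priori be glued with twisting about $A$ (yielding $T_A^{n}(c_P)$ for $n\neq 0$, which has the same arc decomposition but is not isotopic to $c_P$), and this is ruled out only by invoking that $c_Q$ bounds disks in both handlebodies (or minimality with respect to $X$) --- but since the paper's own proof makes the identical leap, your write-up is at least as complete as the original.
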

\begin{proof}
If $Q=P$, then $\mathcal{C}(Q)=\mathcal{C}(P)=1$.\\
Conversely, $\mathcal{C}(Q)=1$ only if $n_{AQ}=2$ and $n_{BQ}=n_{CQ}=0$. Therefore $c_Q\cap\Sigma_2'$ (similarly $c_Q\cap\Sigma_2''$) is a single $(a,a)$ arc. Therefore upto isotopy, $c_Q=c_P$ and hence the result.
\end{proof}
%
	\begin{lemma}\label{arc-lemma}
		Let $c_Q$ be any reducing curve on $\Sigma_2$. Then $c_Q$ must contain atleast one essential proper simple arc of the type $(a,a), (b,b), (c,c)$ or $(b,c)$.
	\end{lemma}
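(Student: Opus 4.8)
The plan is to argue by contradiction, using the classification of arc-types together with the pairing structure already set up in Section~\ref{setup}. Suppose, for contradiction, that $c_Q$ contains \emph{none} of the arc-types $(a,a)$, $(b,b)$, $(c,c)$, or $(b,c)$. Since $c_Q$ is essential and must intersect at least one of $A,B,C$, it decomposes into essential proper simple arcs on $\Sigma_2'$ and $\Sigma_2''$, and by our assumption the only arc-types permitted to appear are $(a,b)$ and $(a,c)$. So I first record the consequence that, under the contradiction hypothesis, \emph{every} arc of $c_Q$ has exactly one endpoint on $A$, with its other endpoint on either $B$ or $C$.

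Next I would exploit a counting (parity) argument on the endpoints lying on the curves $B$ and $C$. Every endpoint on $B$ belongs to an $(a,b)$-arc and every endpoint on $C$ belongs to an $(a,c)$-arc, so the total number of arc-endpoints on $A$ equals $n_{BQ}+n_{CQ}$, whereas each arc uses exactly one $A$-endpoint; simultaneously the $A$-endpoints must come in pairs matching the $2\,(\cdot)$ structure since each intersection point of $c_Q$ with $A$ is the shared endpoint of exactly one arc in $\Sigma_2'$ and one arc in $\Sigma_2''$. The key point is to track how the arcs glue across $A$: following $c_Q$ as a single closed curve, consecutive arcs share endpoints on $A$, $B$, or $C$, and I would show that allowing only $(a,b)$ and $(a,c)$ arcs forces $c_Q$ either to fail to close up into a \emph{separating} simple closed curve, or to be inessential (bounding a disk or boundary-parallel). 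Concretely, I expect to show that such a curve is homologous to a non-separating curve, contradicting that $c_Q$ is the reducing curve of a reducing sphere, which must be separating on $\Sigma_2$.

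The cleanest route for this last step is probably homological: I would compute the class $[c_Q]$ in $H_1(\Sigma_2;\mathbb{Z}/2)$ using the intersection numbers $n_{AQ}, n_{BQ}, n_{CQ}$ with the curves $A,B,C$ (and $X,Y,Z$). A separating curve must have trivial $\mathbb{Z}/2$-homology class, which imposes parity constraints $n_{AQ}\equiv n_{BQ}\equiv n_{CQ}\equiv 0$ already noted in the text. The arc-type restriction $(a,b),(a,c)$ only, combined with how these arcs pair up the intersection points on $A$, should force the algebraic intersection of $c_Q$ with some non-separating curve (say one dual to $A$) to be odd, violating separability. Alternatively, a purely combinatorial argument traces the cyclic sequence of boundary-intersections of $c_Q$ and shows no admissible gluing produces a single separating loop.

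The main obstacle I anticipate is making the gluing/closing-up argument rigorous rather than merely picture-plausible: one must carefully account for \emph{all} the ways the $(a,b)$ and $(a,c)$ arcs on $\Sigma_2'$ can be matched to those on $\Sigma_2''$ across the shared boundary circles $A,B,C$, and verify that in every case the resulting $1$-manifold either is disconnected, non-separating, or inessential. Here Table~\ref{arc-table} does part of the work by restricting which arc-types co-occur, but it does not by itself rule out the all-$(a,b)$-and-$(a,c)$ scenario, so the genuinely new content of the lemma lies precisely in closing this combinatorial gap; I would therefore invest the most care in that case analysis, likely phrasing it homologically to keep it short.
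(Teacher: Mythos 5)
Your setup (argue by contradiction, observe that the only surviving arc-types are $(a,b)$ and $(a,c)$, then derive that such a configuration cannot be a separating simple closed curve) is the same skeleton as the paper's proof. But the proposal stops exactly where the lemma's content begins: the decisive step is announced as ``should force'' and ``I expect to show,'' and the specific tool you lean on cannot deliver it. A separating curve is null-homologous, so \emph{all} of its mod~$2$ intersection numbers with $A,B,C,X,Y,Z$ vanish automatically; as you yourself note, $n_{AQ},n_{BQ},n_{CQ}$ are already even, so no $\mathbb{Z}/2$ or parity computation can produce the contradiction, and the claim that some intersection number comes out ``odd'' is inconsistent with that observation. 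Likewise the endpoint count only yields $n_{AQ}=n_{BQ}+n_{CQ}$, which is perfectly consistent with $c_Q$ being separating. So as written there is a genuine gap.

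The step that closes it is an \emph{integral} (oriented) intersection argument, which is what the paper uses: under the contradiction hypothesis every arc of $c_Q$ has exactly one endpoint on $A$, and since consecutive arcs of $c_Q$ lie on opposite sides of $A\cup B\cup C$, traversing $c_Q$ with a fixed orientation makes every arc in $\Sigma_2'$ run from $A$ to $B\cup C$ and every arc in $\Sigma_2''$ run the other way (or vice versa). Hence at every point of $c_Q\cap A$ the curve crosses $A$ in the same direction, so the algebraic intersection number of $c_Q$ with $A$ is $\pm n_{AQ}\neq 0$ (note $n_{AQ}>0$ because $c_Q$ must meet $A\cup B\cup C$ and every arc ends on $A$), contradicting that a separating curve has zero algebraic intersection with every closed curve. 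The paper reaches this conclusion after first splitting into cases: it invokes Lemma~4 of \cite{scharlemann2003automorphisms} (existence of slope-$\infty$ arcs in a genus-one summand) to rule out $n_{BQ}\neq 0\neq n_{CQ}$, and then runs the orientation argument in the remaining all-$(a,b)$ case after a surgery to a torus. If you replace your parity computation with the oriented crossing count above, your argument not only becomes correct but handles both cases uniformly, without appealing to Scharlemann's Lemma~4 or the surgery.
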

	
	\begin{proof}
		All arcs of $c_Q$ on $\Sigma_2'$ or $\Sigma_2''$ here are assumed to be essential proper and simple. If possible, let $c_Q$ not have any $(a,a),(b,b),(c,c)$ and $(b,c)$ arc. Then all arcs in $c_Q\cap\Sigma_2'$ (similarly in $c_Q\cap\Sigma_2'$) are either $(a,b)$ or $(a,c)$ arcs.
		So every arc with one end on $B$ (similarly on $C$) must have the other end on $A$ on both $\Sigma_2'$ and $\Sigma_2''$ and so if $n_{BQ} \neq 0, n_{CQ} \neq 0$ then $c_Q$ can not have an arc of slope $\infty$ in any of $\Sigma_2^\pm$. This contradicts Lemma 4 of \cite{scharlemann2003automorphisms} as $Q$ is non-standard. So, either $n_{BQ}=0$ or $n_{CQ}=0$.
		
		Without loss of generality, let $n_{CQ}=0$. Then all arcs in $c_Q\cap\Sigma_2'$ (resp. $\Sigma_2''$) are $(a,b)$-arcs. Then there exists a pair of $(b,b)$-arcs say $\eta',\eta''$ respectively in $\Sigma_2'$ and $\Sigma_2''$ with common end-points such that $c_Q$ lies in one of the components of $T'=\Sigma_2-(\eta'\cup \eta'')$. We perform surgery of $T'$ along $\eta'\cup \eta''$ to obtain the torus $T$ on which $c_Q$ lies and must separate $T$. Hence $c_Q$ must bound a disk on $T$. Just by starting at any point on $c_Q$ and following the curve, we can see that any orientation on $c_Q$ will induce an orientation on $(a,b)$ arcs in $\Sigma_2'$ either as all arcs starting on $A$ and ending on $B$ or as all arcs starting on $B$ and ending on $A$. But then, the algebraic intersection number of $c_Q$ with $A$ (or $B$) is not zero, a contradiction for $c_Q$ to bound a disk.
		
		Therefore, $c_Q$ must contain atleast one of $(a,a),(b,b),(c,c)$ and $(b,c)$ arc.
	\end{proof}
	
\begin{lemma}
	Let $Q\neq P$ be any reducing sphere. Then $n_{AQ}\ne n_{BQ}+n_{CQ}$.
\end{lemma}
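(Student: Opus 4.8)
The plan is to translate the asserted inequality into a statement about the arc counts of $c_Q$ on the two thrice-boundered spheres $\Sigma_2'$ and $\Sigma_2''$, and then to play the coexistence restrictions of Table \ref{arc-table} against Lemma \ref{arc-lemma}. First I would fix notation for the number of arcs of each type of $c_Q\cap\Sigma_2'$, writing $p_{aa},p_{bb},p_{cc},p_{ab},p_{bc},p_{ac}$ for the number of $(a,a),(b,b),(c,c),(a,b),(b,c),(a,c)$ arcs respectively, and $p''_{aa},\dots$ for the analogous counts on $\Sigma_2''$. Since every intersection point of $c_Q$ with $A$ is an endpoint of exactly one arc in $\Sigma_2'$, with an $(a,a)$ arc contributing two such endpoints and an $(a,b)$ or $(a,c)$ arc contributing one, counting endpoints on each of $A,B,C$ from the $\Sigma_2'$ side gives
\[ n_{AQ} = 2p_{aa} + p_{ab} + p_{ac}, \qquad n_{BQ} = 2p_{bb} + p_{ab} + p_{bc}, \qquad n_{CQ} = 2p_{cc} + p_{ac} + p_{bc}. \]

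Subtracting, the terms coming from arcs with a single endpoint on $A$ cancel, and I obtain the key identity
\[ n_{BQ} + n_{CQ} - n_{AQ} = 2\bigl(p_{bb} + p_{cc} + p_{bc} - p_{aa}\bigr), \]
with the identical formula holding on $\Sigma_2''$ in terms of the $p''_\bullet$. I would then argue by contradiction: suppose $n_{AQ} = n_{BQ} + n_{CQ}$. The identity forces $p_{bb} + p_{cc} + p_{bc} = p_{aa}$ on $\Sigma_2'$, and likewise on $\Sigma_2''$. This is exactly where the coexistence restrictions enter. By Table \ref{arc-table}, on a single thrice-boundered sphere an $(a,a)$ arc cannot coexist with any $(b,b)$, $(c,c)$ or $(b,c)$ arc. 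Hence if $p_{aa}>0$ then $p_{bb}=p_{cc}=p_{bc}=0$, and the equation collapses to $p_{aa}=0$, a contradiction; therefore $p_{aa}=0$, and the same equation then forces $p_{bb}=p_{cc}=p_{bc}=0$ as well. Thus $\Sigma_2'$ carries only $(a,b)$ and $(a,c)$ arcs, and the identical reasoning applied on $\Sigma_2''$ shows the same there.

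But then $c_Q$ contains no arc of type $(a,a),(b,b),(c,c)$ or $(b,c)$ on either side, which directly contradicts Lemma \ref{arc-lemma}. This contradiction establishes $n_{AQ}\neq n_{BQ}+n_{CQ}$. The bulk of the argument is just the endpoint bookkeeping of the first two displays; the only place that needs genuine care is making sure the coexistence constraints of Table \ref{arc-table} are applied \emph{per} thrice-boundered sphere, and that the vanishing conclusion is drawn on both $\Sigma_2'$ and $\Sigma_2''$ before invoking Lemma \ref{arc-lemma}, since that lemma is a statement about the arcs of $c_Q$ taken together rather than on one side alone.
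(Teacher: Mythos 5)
Your proof is correct and follows essentially the same route as the paper's: both arguments use the endpoint count to show that the assumed equality forces the arcs on each thrice-boundered sphere to be only of types $(a,b)$ and $(a,c)$, and then derive a contradiction with Lemma \ref{arc-lemma}. The only difference is presentational — you make the bookkeeping explicit and invoke Table \ref{arc-table} for the incompatibility of $(a,a)$ with $(b,b)$, $(c,c)$, $(b,c)$, where the paper re-derives that incompatibility via an outermost-arc/bigon argument.
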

\begin{proof}
	Suppose that $n_{AQ} = n_{BQ}+n_{CQ}$. All arcs of $c_Q$ on $\Sigma_2'$ or $\Sigma_2''$ have to be essential and simple due to the minimal position of $c_Q$ with respect to $A, B, C, X, Y$ and $Z$. So if there is an essential simple $(b,b), (c,c)$ or a $(b,c)$ arc of $c_Q$ on $\Sigma_2'$ or $\Sigma_2''$, then by pairing the remaining points, its easy to see that there has to be an essential simple $(a,a)$ arc. Such an $(a,a)$ arc must be the outermost and must also allow for the presence of the other arcs. This implies that such an $(a,a)$ arc and the curve $A$ bound a bigon, contradicting the minimal position of $c_Q$ with respect to $A$. So none of these arcs of $c_Q$ on $\Sigma_2'$ or $\Sigma_2''$ is an $(a,a),(b,b),(c,c)$ or $(b,c)$ arc and all arcs  must be $(a,b)$ and $(a,c)$ arcs. But this contradicts the lemma \ref{arc-lemma} that $c_Q$ must have atleast one of $(a,a),(b,b),(c,c)$ or $(b,c)$ arcs. So, $n_{AQ}\ne n_{BQ}+n_{CQ}$.
\end{proof}

\begin{lemma}
	Let $Q\ne P$ be any reducing sphere. Then $n_{BQ} \ne n_{CQ}$.
\end{lemma}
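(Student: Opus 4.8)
The plan is to prove the contrapositive: assuming $n_{BQ}=n_{CQ}$, I would show that $Q$ must be the standard sphere $P$. First I would record, for each thrice-bounded sphere $\Sigma_2'$ and $\Sigma_2''$, the number of arcs of each type, writing $p_{aa}',p_{ab}',p_{ac}',p_{bc}',p_{bb}',p_{cc}'$ for the counts on $\Sigma_2'$ and similarly with double primes on $\Sigma_2''$. Since each of the $n_{BQ}$ points of $c_Q\cap B$ is an endpoint of exactly one arc in $\Sigma_2'$, counting endpoints on each boundary circle gives $2p_{bb}'+p_{ab}'+p_{bc}'=n_{BQ}$ and $2p_{cc}'+p_{ac}'+p_{bc}'=n_{CQ}$, together with the analogous identities on $\Sigma_2''$ and on $A$. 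If $n_{BQ}=n_{CQ}=0$ then $c_Q$ misses $B$ and $C$, so every arc is an $(a,a)$ arc; as all essential $(a,a)$ arcs in a pants are parallel, connectedness of $c_Q$ forces a single such arc in each pants, whence $c_Q$ is isotopic to $c_P$ and $Q=P$. Hence I may assume $n_{BQ}=n_{CQ}=m>0$.

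Next I would use Table~\ref{arc-table} to eliminate the $(b,b)$ and $(c,c)$ arcs. If $p_{bb}'>0$, the table forbids $(c,c)$ and $(a,c)$ arcs on $\Sigma_2'$, so the $C$-endpoint count there is $p_{bc}'$, while the $B$-endpoint count is $2p_{bb}'+p_{ab}'+p_{bc}'>p_{bc}'$, contradicting $n_{BQ}=n_{CQ}$; the symmetric argument handles $(c,c)$, so $p_{bb}'=p_{cc}'=0$ and likewise on $\Sigma_2''$. Substituting these into the endpoint identities, $n_{BQ}=n_{CQ}$ forces $p_{ab}'=p_{ac}'$ and $p_{ab}''=p_{ac}''$, so in each pants the numbers of $(a,b)$ and $(a,c)$ arcs agree and the surviving types are $(a,a),(a,b),(a,c),(b,c)$, with $(a,a)$ and $(b,c)$ mutually exclusive in each pants by the table. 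Computing $n_{AQ}$ and $n_{BQ}+n_{CQ}$ from these counts, the preceding lemma (that $n_{AQ}\ne n_{BQ}+n_{CQ}$ for $Q\ne P$) translates into $p_{aa}'\ne p_{bc}'$, so in each pants exactly one of $p_{aa}',p_{bc}'$ is positive; moreover Lemma~\ref{arc-lemma} guarantees that an $(a,a)$ or $(b,c)$ arc is present.

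The crux, and the step I expect to be the main obstacle, is ruling out the remaining \emph{balanced} configurations — in particular a pants carrying equal numbers of $(a,b)$ and $(a,c)$ arcs together with one or more $(b,c)$ arcs but no $(a,a)$ arc (e.g.\ $p_{ab}'=p_{ac}'=p_{bc}'=1$), which survives both the counting identities and the table. Since the three arcs $(a,b),(a,c),(b,c)$ are realizable disjointly as the seams of a pants, the obstruction must be global, and I would extract it from the separating hypothesis: as a reducing curve $c_Q$ is null-homologous, so its algebraic intersection with each of $A,B,C$ vanishes and the intersection points on each of these circles split into equal numbers of positively and negatively signed points. Orienting $c_Q$ and performing the surgery-to-torus reduction used in the proof of Lemma~\ref{arc-lemma}, I would track the orientations induced on the arcs and show that these sign constraints are incompatible with assembling the arcs into a single connected separating simple closed curve, unless the configuration degenerates to the single $(a,a)$ arc per pants, i.e.\ $c_Q=c_P$. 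This orientation-and-gluing analysis over the few surviving cases is where the real work lies; the counting and table steps above are routine.
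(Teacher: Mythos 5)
There is a genuine gap. Your counting reductions (eliminating $(b,b)$ and $(c,c)$ arcs, forcing $p_{ab}'=p_{ac}'$, etc.) are fine, but you yourself flag the decisive step --- ruling out the balanced configuration with, say, one arc each of types $(a,b)$, $(a,c)$, $(b,c)$ in each pair of pants --- as something you ``would'' do by an orientation-and-gluing analysis, and that step does not go through with the tool you propose. The only global input you invoke is that $c_Q$ is separating, i.e.\ null-homologous, so that its algebraic intersection with $A$, $B$, $C$ vanishes. But the balanced configuration satisfies all of those sign constraints: gluing the two seam-triples with the ``crossed'' identification produces a single connected simple closed curve that alternates between $\Sigma_2'$ and $\Sigma_2''$ at each crossing of $A$, $B$ and $C$, so each of these circles acquires one positive and one negative intersection point and the algebraic intersections are all zero. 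What actually fails for such a curve is that it does not bound disks in \emph{both} handlebodies, i.e.\ it is not a reducing curve --- and null-homology on $\Sigma_2$ cannot detect that. So the contradiction you hope to extract is not there, and the proof is incomplete at exactly the point you identify as the crux.

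The paper avoids this by going through Akbas's Lemma~1, which is a statement specifically about reducing spheres: it controls the counts $N(Q,\Sigma_2^{\pm},a)$ of arcs of rational slope $a$ in the genus-one summands, gives $N(Q,\Sigma_2^-,0)\ne N(Q,\Sigma_2^-,\infty)$ and the reciprocity $N(Q,\Sigma_2^-,a)=N(Q,\Sigma_2^+,\tfrac{1}{a})$, and then expresses $n_{BQ}-n_{CQ}$ as a manifestly nonzero combination of these counts. Some input of this kind --- using that $Q$ is a reducing sphere and not merely that $c_Q$ is an essential separating curve --- appears unavoidable, since the lemma is false for general separating curves. To repair your argument you would need to replace the homological step with an appeal to such a structural result about reducing curves (e.g.\ the slope description of the arcs in $\Sigma_2^{\pm}$).
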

\begin{proof}
	Lemma 1 in \cite{akbas} showed that $N(Q,\Sigma_2^-,0)\ne N(Q,\Sigma_2^-,\infty)$. If $N(Q,\Sigma_2^-,0)\neq 0$ and if $N(Q,\Sigma_2^-,a)\ne 0$ for some $a\in\mathbb{Q}$ then $a=0$ or $\frac{1}{p}$ for some $p\in \mathbb{N}$. Again from lemma 1 in \cite{akbas} we have $N(Q,\Sigma_2^-,a)=N(Q,\Sigma_2^+,\frac{1}{a})$. Therefore, 
	$n_{BQ} = N(Q,\Sigma_2^-,0)+pN\left(Q,\Sigma_2^-,\frac{1}{p}\right)$, and $n_{CQ} = N(Q,\Sigma_2^-,\infty)+N\left(Q,\Sigma_2^-,\frac{1}{p}\right)$.
	If $p=1$ then $n_{BQ} - n_{CQ} = N(Q,\Sigma_2^-,0) - N(Q,\Sigma_2^-,\infty)\ne 0$. Now $p\ne 1$ implies $N(Q,\Sigma_2^-,\infty) = N(Q,\Sigma_2^+,0)=0$. Therefore 
	\begin{eqnarray*}
		n_{BQ} - n_{CQ} &=& N(Q,\Sigma_2^-,0) + pN\left(Q,\Sigma_2^-,\frac{1}{p}\right) - N\left(Q,\Sigma_2^+,p\right)\\ 
		&=& N(Q,\Sigma_2^-,0) + (p-1)N\left(Q,\Sigma_2^-,\frac{1}{p}\right) \ne 0
	\end{eqnarray*}
 Therefore $Q\ne P \implies n_{BQ} \ne n_{CQ}$.
\end{proof}

\begin{theorem}
	Let $Q\ne P$ be any reducing sphere. If $n_{AQ} > n_{BQ} + n_{CQ}$ then exactly one of the following occurs:\\ (i) $\mathcal{C}(\beta(Q))<\mathcal{C}(Q)$ and $\mathcal{C}(\beta^{-1}(Q))>\mathcal{C}(Q)$\\
(ii)	$\mathcal{C}(\beta(Q))>\mathcal{C}(Q)$ and $\mathcal{C}(\beta^{-1}(Q))<\mathcal{C}(Q)$.\\
In any case if $n_{AQ} > n_{BQ} + n_{CQ}$, $\mathcal{C}(\varphi(Q))>\mathcal{C}(Q)$ and $\mathcal{C}(\varphi\nu(Q))>\mathcal{C}(Q)$.
\end{theorem}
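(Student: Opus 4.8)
The plan is to split the statement into its two halves: the \emph{permutation half} for $\varphi$ and $\varphi\nu$, which I expect to be routine, and the \emph{twisting half} for $\beta^{\pm1}$, which carries all the difficulty.

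For the $\varphi$ and $\varphi\nu$ half I would exploit that $\varphi$ and $\nu$ merely permute the reference curves $A,B,C$ while fixing each of $\Sigma_2',\Sigma_2''$, so they permute the three intersection numbers. Since geometric intersection is a homeomorphism invariant, $i(\varphi(c_Q),A)=i(c_Q,\varphi^{-1}(A))=i(c_Q,B)=n_{BQ}$, and similarly for the other pairings. Using Lemma \ref{phi-prop} ($\varphi\colon A\leftrightarrow B$, $C\mapsto C$) together with $\nu\colon B\leftrightarrow C$, $A\mapsto A$, I would record
\begin{align*}
\mathcal{C}(\varphi(Q))-\mathcal{C}(Q)&=\tfrac12\big(n_{AQ}-n_{BQ}\big),\\
\mathcal{C}(\varphi\nu(Q))-\mathcal{C}(Q)&=\tfrac12\big(n_{AQ}-n_{CQ}\big).
\end{align*}
Because $n_{BQ},n_{CQ}\ge 0$, the hypothesis $n_{AQ}>n_{BQ}+n_{CQ}$ forces $n_{AQ}>n_{BQ}$ and $n_{AQ}>n_{CQ}$, so both differences are strictly positive; this settles the last sentence of the theorem.

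For the $\beta$ half I would first reduce everything to the single quantity $n_{AQ}$. Since $c_P\cdot B=c_P\cdot C=0$ and (by the observation following Figure \ref{beta2-act}) $\beta$ alters $c_Q$ only inside a collar of $c_P$, the numbers $n_{BQ},n_{CQ}$ are $\beta^{\pm1}$-invariant, whence $\mathcal{C}(\beta^{\pm1}(Q))-\mathcal{C}(Q)=\tfrac12\big(g(\pm1)-g(0)\big)$, where $g(n):=n_{A,\beta^{n}(Q)}=i(\beta^{n}(c_Q),A)$. The alternatives (i) and (ii) are then exactly the two ways $g$ can be strictly monotone across $n=0$, while the two possibilities I must exclude are ``$g$ has a strict local maximum at $0$'' (both $\beta,\beta^{-1}$ decrease $\mathcal{C}$) and ``$g$ has a local minimum at $0$'' (both increase $\mathcal{C}$). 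I would kill the first by convexity of $n\mapsto i(\beta^{n}(c_Q),A)$: as $\beta$ is a half-twist about $c_P$ (so $\beta^{2}=T_{c_P}$ is an honest Dehn twist), the twisting coordinate of $\beta^{n}(c_Q)$ about $c_P$ is affine in $n$, and intersection with the fixed curve $A$ is a convex piecewise-linear function of that coordinate; hence $g(1)+g(-1)\ge 2g(0)$ and a strict local maximum is impossible.

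The main obstacle is ruling out the local-minimum case, i.e.\ proving that under the hypothesis $n_{AQ}>n_{BQ}+n_{CQ}$ the value $g(0)=n_{AQ}$ strictly exceeds $g_{\min}:=\min_{n}g(n)$. Combined with integer convexity this places $0$ strictly outside the interval of minimizers of $g$, forcing strict monotonicity at $0$ and therefore exactly one of (i),(ii) with strict inequalities. I would try to establish $g_{\min}\le n_{BQ}+n_{CQ}$ by a bigon/arc analysis in the collar of $c_P$: after maximally untwisting $c_Q$ about $c_P$, the surviving intersections with $A$ should be carried only by arcs of $c_Q$ that genuinely separate the feet of the two arcs $A\cap\Sigma_2^{+}$ and $A\cap\Sigma_2^{-}$, and these ought to be bounded by the $\beta$-invariant endpoints on $B$ and $C$, while the $(a,a)$-arcs created by winding about $c_P$ supply the removable excess. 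Pinning down this correspondence between ``removable'' $A$-intersections and the arc-types of Table \ref{arc-table} (leaning on Lemma \ref{arc-lemma} and the coexistence restrictions there to locate the offending $(a,a)$-arcs), and verifying that a single half-twist changes $n_{AQ}$ by a definite nonzero amount so the monotonicity is strict, is where I expect the real work to lie.
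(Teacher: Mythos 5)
Your treatment of $\varphi$ and $\varphi\nu$ is correct and coincides with the paper's: both arguments reduce to the permutation of $(n_{AQ},n_{BQ},n_{CQ})$ induced by $\varphi$ and $\nu$, giving $\mathcal{C}(\varphi(Q))-\mathcal{C}(Q)=\tfrac12(n_{AQ}-n_{BQ})$ and $\mathcal{C}(\varphi\nu(Q))-\mathcal{C}(Q)=\tfrac12(n_{AQ}-n_{CQ})$, both positive under the hypothesis since $n_{BQ},n_{CQ}\ge 0$. The reduction of the $\beta$-statement to the single function $g(n)=\beta^{n}(c_Q)\cdot A$, using that $n_{BQ}$ and $n_{CQ}$ are unchanged by $\beta^{\pm1}$ because $\beta$ is supported in a collar of $c_P$ disjoint from $B$ and $C$, is also sound.

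The gap is in the $\beta$ half. The dichotomy (i)/(ii) says exactly that $g$ is strictly monotone across $n=0$ in one of the two senses; you correctly identify the two configurations to exclude, but you establish neither. The convexity of $n\mapsto\beta^{n}(c_Q)\cdot A$ is asserted on the strength of a heuristic about twisting coordinates; for a half-twist about a separating curve this is not an off-the-shelf fact and would itself require proof. More seriously, the step carrying all the content --- showing that $0$ is not a minimizer of $g$, i.e.\ that one of $\beta^{\pm1}$ strictly decreases $n_{AQ}$ when $n_{AQ}>n_{BQ}+n_{CQ}$ --- is explicitly left as a plan, and the intermediate bound you propose to aim for, $\min_n g(n)\le n_{BQ}+n_{CQ}$, is not justified there. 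This is precisely the step the paper proves by a direct arc analysis: the hypothesis $n_{AQ}>n_{BQ}+n_{CQ}$ forces an $(a,a)$ arc on each of $\Sigma_2'$ and $\Sigma_2''$; the innermost such arcs force every strand of $c_Q$ crossing the collar $\mathcal{A}$ of $c_P$ to meet $A\cap\mathcal{A}$; and because the $(a,a)$, $(a,b)$, $(a,c)$ strands are nested and wind coherently (Figures \ref{nested-arcs} and \ref{annulus}), one choice of $\beta^{\pm1}$ removes at least one intersection with $A$ from every crossing strand while the other adds one. That single observation yields both exclusions simultaneously, with no appeal to convexity. Until you supply an argument of this kind (or a genuine proof of convexity together with non-minimality of $g$ at $0$), the $\beta$ half of the theorem is not established.
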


\begin{proof}
 	Let us consider a collar neighbourhood $\mathcal{A}$ of $c_P$ on $\Sigma_2$. 
 	$c_Q$ is assumed to be in minimal position with respect to $X$ and $\partial\mathcal{A}$ along with $A,B,C$. Since ${n_A}_Q > {n_B}_Q + {n_C}_Q$, $c_Q$ contains an $(a,a)$ arc on both $\Sigma_2'$ and $\Sigma_2''$. Let us denote the pair of arcs in $\mathcal{A}\cap A$ and $\mathcal{A}\cap X$ by $a',a''$ and $x',x''$ respectively. All the $(a,a)$ arcs on $\Sigma_2'$ (similarly on $\Sigma_2''$) are disjoint parallel arcs intersecting $X$ exactly once (on $x'$ or $x''$). Clearly the ends of $(a,a)$ arcs on $\Sigma_2'$ (and similarly on $\Sigma_2''$) can be nested around some meridian in $\Sigma_2^\pm$. We pick the innermost and the outermost $(a,a)$ arc. All $(a,a)$ arcs must lie parallel between them and all $(c,a)$ arcs in $\Sigma_2'$ (resp. on $\Sigma_2''$) and $(b,a)$ arcs in $\Sigma_2'$ (resp. on $\Sigma_2''$) must lie on the opposite sides of the $(a,a)$ arcs.  
 	\begin{figure}[h!]
 		\centering\includegraphics[width=.8\linewidth]{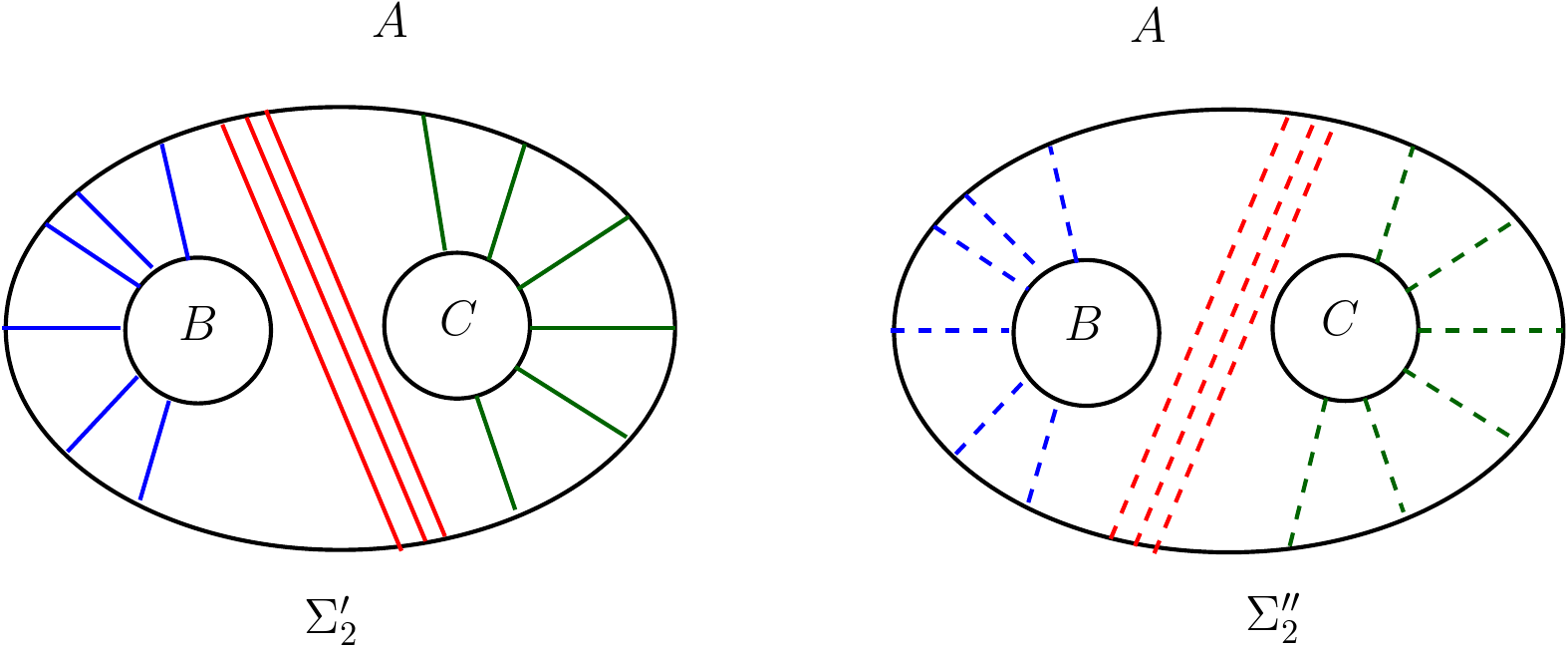}
 		\caption{$(a,a),(a,b),(a,c)$ appear in groups}\label{nested-arcs}
 	\end{figure}
 Therefore all $(b,a)$ arcs are nested on $\Sigma_2^-$ (or $\Sigma_2^+$) and all $(c,a)$ arcs are nested on $\Sigma_2^+$ (or $\Sigma_2^-$). Let us isotope $c_Q$ on $\Sigma_2$ such that the outermost $(a,b)$ and $(a,c)$ arcs have their $a$-ends and their intersection with $X$ (if any) inside $\mathcal{A}$.
 \begin{figure}[h!]
 	\centering\includegraphics[width=.8\linewidth]{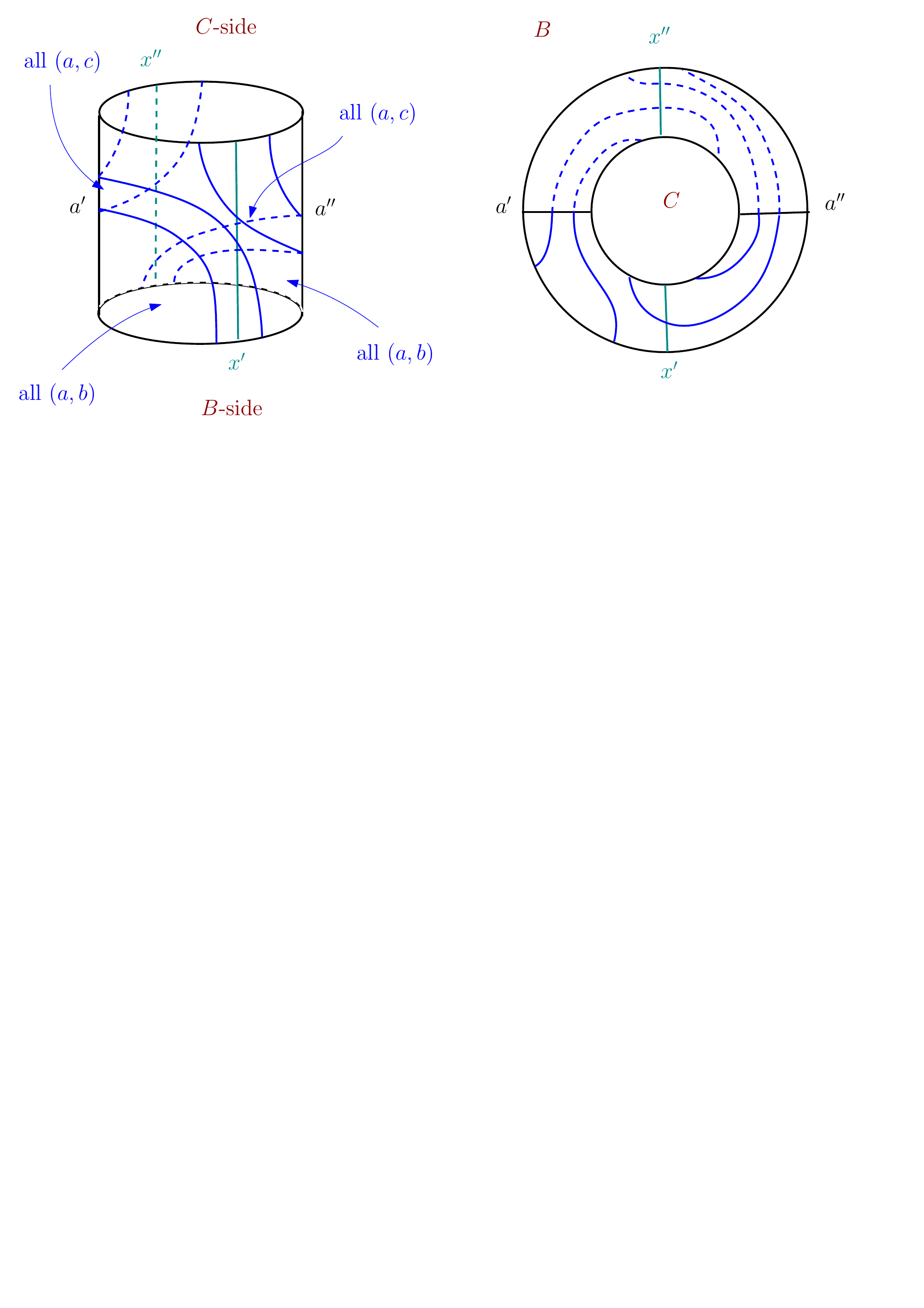}
 	\caption{Collar neighbourhood of $c_P$; annulus $\mathcal{A}$}\label{annulus}
 \end{figure}
 With this setup, it is easy to observe that the innermost $(a,a)$ arcs on both $\Sigma_2'$ and $\Sigma_2''$ ensures that every arc on $\mathcal{A}$ must intersect $a'$ or $a''$ atleast once. Now from figure \ref{annulus} it can be observed that suitable application of $\beta$ or $\beta^{-1}$ reduces $n_{AQ}$ by one for each arc connecting one boundary of  $\mathcal{A}$ to the other. Correspondingly either $\beta^{-1}$ or $\beta$  increases $n_{AQ}$ by the same amount. Moreover, application of $\varphi$ (or $\varphi\nu$) too increases $\mathcal{C}(Q)$. As $\varphi$ only exchanges $n_{AQ}$ and $n_{BQ}$ keeping $n_{CQ}$ unchanged. So $\varphi$ increases $\mathcal{C}(Q)$. Therefore, the reduction in $\mathcal{C}(Q)$ is done only by a unique choice between $\beta$ or $\beta^{-1}$.

 Moreover, the amount of reduction is equal to the number of non-trivial arcs of $c_Q$ on the annulus $\mathcal{A}$. That number can be represented as
 \[ N_\beta = 2\left[{n_B}_Q + {n_C}_Q - N\left(Q,\Sigma_2^-,\frac{1}{p}\right)\right], \quad p\geq 1.\]
 \end{proof}
		
\begin{theorem}\label{phi-thm}
	Let $Q$ be any non-standard reducing sphere and $\varphi(Q)=R$ and $\varphi\nu(Q)=S$. Then
\begin{enumerate}[(i)]
	\item if $n_{AQ}<n_{BQ}+n_{CQ}$ and $n_{BQ}>n_{CQ} $ then $\mathcal{C}(R)<\mathcal{C}(Q)$ and $n_{AR} > n_{BR} + n_{CR}$;
	\item if $n_{AQ}<n_{BQ}+n_{CQ}$ and $n_{BQ}<n_{CQ} $ then $\mathcal{C}(S)<\mathcal{C}(Q)$ and $n_{AS} > n_{BS} + n_{CS}$
\end{enumerate}
In either case $\mathcal{C}(\beta(Q))>\mathcal{C}(Q)$ and $\mathcal{C}(\beta^{-1}(Q))>\mathcal{C}(Q)$
\end{theorem}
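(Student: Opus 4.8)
The plan is to push everything through the intersection triple $(n_{AQ},n_{BQ},n_{CQ})$ together with the arc‑type bookkeeping of Table \ref{arc-table}. First I would record how $\varphi$ and $\nu$ act on this triple. Since geometric intersection number is a homeomorphism invariant and $\varphi^2=\nu^2=1$, for any reducing sphere $c_{\varphi(Q)}\cdot A=c_Q\cdot\varphi^{-1}(A)=c_Q\cdot B$, and similarly for $B,C$; by Lemma \ref{phi-prop} this shows $\varphi$ sends $(n_{AQ},n_{BQ},n_{CQ})$ to $(n_{BQ},n_{AQ},n_{CQ})$, while $\nu$ sends it to $(n_{AQ},n_{CQ},n_{BQ})$. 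Because $\mathcal{C}=\tfrac12 n_{AQ}+n_{BQ}+n_{CQ}$ is symmetric in $n_{BQ},n_{CQ}$, we have $\mathcal{C}(\nu(Q))=\mathcal{C}(Q)$. This reduces part (ii) to part (i): if $Q$ satisfies the hypotheses of (ii) then $Q'=\nu(Q)$ satisfies those of (i), and $S=\varphi(Q')$, so (ii) follows from (i) applied to $Q'$. For (i) I would then convert the two conclusions into one inequality: the displayed action of $\varphi$ gives $\mathcal{C}(R)-\mathcal{C}(Q)=\tfrac12(n_{AQ}-n_{BQ})$ and $n_{AR}-(n_{BR}+n_{CR})=n_{BQ}-(n_{AQ}+n_{CQ})$, so both assertions of (i) follow from the single inequality $n_{BQ}>n_{AQ}+n_{CQ}$, which also forces $n_{AQ}<n_{BQ}$.

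Next I would run the arc count. Writing $p_{aa},p_{bb},\dots$ for the numbers of arcs of each type on $\Sigma_2'$ and tallying endpoints on $A,B,C$ yields $n_{BQ}-(n_{AQ}+n_{CQ})=2(p_{bb}-p_{aa}-p_{cc}-p_{ac})$. The hypothesis $n_{AQ}<n_{BQ}+n_{CQ}$ forces $p_{aa}=0$, since an $(a,a)$ arc would, by Table \ref{arc-table}, exclude $(b,b),(c,c),(b,c)$ and hence give $n_{AQ}>n_{BQ}+n_{CQ}$. So it remains to show $p_{bb}>p_{cc}+p_{ac}$. Table \ref{arc-table} does most of the work: if a $(b,b)$ arc is present it excludes $(c,c)$ and $(a,c)$, so $p_{cc}=p_{ac}=0$ and the inequality is immediate; if instead a $(c,c)$ arc is present it excludes $(b,b)$ and $(a,b)$, and a direct count then gives $n_{BQ}<n_{CQ}$, contradicting the standing hypothesis $n_{BQ}>n_{CQ}$ of case (i).

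The hard part will be the one remaining configuration, in which $\Sigma_2'$ (and likewise $\Sigma_2''$) carries none of $(a,a),(b,b),(c,c)$, so that by Lemma \ref{arc-lemma} only the seam arcs $(a,b),(a,c),(b,c)$ occur, with at least one $(b,c)$. Here the endpoint count gives $n_{BQ}-(n_{AQ}+n_{CQ})=-2p_{ac}\le 0$, which apparently defeats the target inequality, so this case must be excluded using that $Q$ is a genuine reducing sphere. When $p_{ac}=0$ we get $n_{BQ}=n_{AQ}+n_{CQ}$, hence $R=\varphi(Q)$ satisfies $n_{AR}=n_{BR}+n_{CR}$; but the triple $(0,2,0)$ of $\varphi(P)$ does not satisfy $n_{B}=n_{A}+n_{C}$, so $Q\neq\varphi(P)$ and therefore $R\neq P$, and $n_{AR}=n_{BR}+n_{CR}$ then contradicts the lemma that $n_{AR}\neq n_{BR}+n_{CR}$ for non‑standard reducing spheres. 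The genuinely delicate sub-case is $p_{ac}>0$; here I would invoke the slope structure of a reducing sphere as in Lemma 1 of \cite{akbas} (the same input that proved $n_{BQ}\neq n_{CQ}$), arguing that a pure‑seam configuration on both pants compatible with $n_{BQ}>n_{CQ}$ forces the arc slopes in $\Sigma_2^{\pm}$ to omit a longitudinal arc and so contradicts Lemma 4 of \cite{scharlemann2003automorphisms}, exactly as in the proof of Lemma \ref{arc-lemma}. Showing that no such pure‑seam configuration bounds a reducing sphere is the crux; everything else is bookkeeping.

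Finally, the last sentence is independent of the $\varphi$-analysis. Because $\beta$ leaves $\Sigma_2^{\pm}$ invariant and alters only $c_Q\cdot A$ inside a collar of $c_P$ (as recorded after Lemma \ref{beta-prop} and illustrated in Figure \ref{beta2-act}), and because $n_{AQ}<n_{BQ}+n_{CQ}$ means $c_Q$ has no $(a,a)$ arc that could be unwound, $n_{AQ}$ is already minimal with respect to twisting about $c_P$. Hence both $\beta$ and $\beta^{-1}$ strictly increase $n_{AQ}$, and therefore raise $\mathcal{C}$ by one half of that increase, giving $\mathcal{C}(\beta(Q))>\mathcal{C}(Q)$ and $\mathcal{C}(\beta^{-1}(Q))>\mathcal{C}(Q)$ in both cases (i) and (ii), since only the common hypothesis $n_{AQ}<n_{BQ}+n_{CQ}$ is used.
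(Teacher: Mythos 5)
Your overall route is the same as the paper's: push everything onto the triple $(n_{AQ},n_{BQ},n_{CQ})$, observe that $\varphi$ swaps the first two entries so that $\mathcal{C}(R)-\mathcal{C}(Q)=\tfrac12(n_{AQ}-n_{BQ})$ and $n_{AR}-(n_{BR}+n_{CR})=n_{BQ}-(n_{AQ}+n_{CQ})$, and then try to upgrade the hypotheses of (i) to the single inequality $n_{BQ}>n_{AQ}+n_{CQ}$ by an arc-type count using Table \ref{arc-table} and Lemma \ref{arc-lemma}. Two of your refinements genuinely improve on the paper's write-up: deducing (ii) from (i) by applying (i) to $\nu(Q)$ (using $\mathcal{C}(\nu(Q))=\mathcal{C}(Q)$) replaces the paper's ``symmetric argument'' with a formal conjugation, and your disposal of the boundary case $p_{ac}=0$ via the lemma that $n_{AR}\neq n_{BR}+n_{CR}$ for non-standard $R$ is a clean trick the paper does not use. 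Your closing paragraph on $\beta$ is at the same level of rigour as the paper's one-line ``easy to verify'' and is acceptable as such.

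The problem is the case you yourself flag as the crux and then do not close: the configuration in which $\Sigma_2'$ (and $\Sigma_2''$) carries only seam arcs $(a,b),(a,c),(b,c)$ with $p_{ac}>0$. There your own endpoint count gives $n_{BQ}-(n_{AQ}+n_{CQ})=-2p_{ac}<0$, so unless this configuration is shown to be impossible for a reducing curve, the conclusion of (i) is simply false in that case; gesturing at the slope structure of Lemma 1 of \cite{akbas} and Lemma 4 of \cite{scharlemann2003automorphisms} is a plan, not a proof. To be fair, you have put your finger on exactly the point the paper glides over: its proof passes from ``there is a $(b,b)$, $(c,c)$ or $(b,c)$ arc'' and ``not all arcs can be $(b,c)$'' straight to ``there has to be a $(b,b)$ or a $(c,c)$'', which silently ignores the mixed seam configuration you isolate --- the three seam types are pairwise disjoint on a thrice-boundered sphere, so nothing in Table \ref{arc-table} forbids it. So the missing step is the same in both texts, and your proposed repair (show that a curve meeting $A\cup B\cup C$ only in seam arcs on both sides admits no longitudinal arc in either $\Sigma_2^{\pm}$ and hence contradicts Lemma 4 of \cite{scharlemann2003automorphisms}, as in the proof of Lemma \ref{arc-lemma}) is the right kind of argument, but until it is actually carried out the proof of the key inequality $n_{BQ}>n_{AQ}+n_{CQ}$, and with it parts (i) and (ii), remains incomplete.
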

	
\begin{proof}
\begin{enumerate}[(i)]
%
	\item If $n_{AQ}<n_{BQ}+n_{CQ}$, then on $\Sigma_2'$ (or $\Sigma_2''$), $c_Q$ cannot contain an $(a,a)$ arc. So by Lemma \ref{arc-lemma}, it must contain a $(b,b), (c,c)$ or a $(b,c)$ arc. Further, if $n_{BQ}>n_{CQ}$, then all the arcs cannot be $(b,c)$ arcs. So there has to be a $(b,b)$ or a $(c,c)$. Once again since the presence of a $(b,b)$ and $(c,c)$ arcs is mutually exclusive and since $n_{BQ}>n_{CQ}$, $c_Q$ has to contain atleast one $(b,b)$ arc and no $(c,c)$ arcs. So if $n_{AQ}<n_{BQ}+n_{CQ}$ and $n_{BQ}>n_{CQ}$, we infer from Table \ref{arc-table} that the only possible arcs of $c_Q$ on $\Sigma_2'$ (or $\Sigma_2''$) are $(b,b), (b,c)$ and $(a,b)$ with atleast one $(b,b)$ arc. This implies that every such arc must have an end point on $B$. So $n_{BQ} > n_{CQ} + n_{AQ}$. Hence in this case,
	$$\mathcal{C}(R)-\mathcal{C}(Q)$$ 
	$$=\frac{1}{2}(n_{AR})+n_{BR}+n_{CR} - \left(\frac{1}{2}(n_{AQ})+n_{BQ}+n_{CQ}\right)$$
	$$= \frac{1}{2}(n_{BQ})+n_{AQ}+n_{CQ} - \left(\frac{1}{2}(n_{AQ})+n_{BQ}+n_{CQ}\right) $$
	$$= \frac{1}{2}(n_{AQ}) - \frac{1}{2}(n_{BQ}) <0,$$
	\item By a symmetric argument, if $n_{AQ}<n_{BQ}+n_{CQ}$ and $n_{BQ}<n_{CQ}$ then $n_{BQ} > n_{CQ} + n_{AQ}$. Hence in this case.
	$$\mathcal{C}(S)-\mathcal{C}(Q)$$ 
	$$= \frac{1}{2}(n_{AS})+n_{BS}+n_{CS} - \left(\frac{1}{2}(n_{AQ})+n_{BQ}+n_{CQ}\right)$$
	$$= \frac{1}{2}(n_{CQ})+n_{AQ}+n_{BQ} - \left(\frac{1}{2}(n_{AQ})+n_{BQ}+n_{CQ}\right)$$
	$$= \frac{1}{2}(n_{AQ}) - \frac{1}{2}(n_{CQ}) <0.$$
	This also proves that reduction of the complexity measure by $\varphi$ or $\varphi\nu$ is by $\frac{1}{2}\left(n_{AQ} - n_{BQ}\right)$ or $\frac{1}{2}\left(n_{AQ} - n_{CQ}\right)$ respectively. Further, we also note that $n_{AR}=n_{BQ} > n_{AQ} + n_{CQ} = n_{BR} + n_{CR}$ and likewise $n_{AS} > n_{BS} + n_{CS}$
	The assertion regarding application of $\beta$ or its inverse is easy to verify.
\end{enumerate}
\end{proof}

%
%
%

The above results lead us to the following algorithm.

\subsection{The algorithm}
	
	Based on the discussion presented above we can present the complexity reduction of an arbitrary reducing sphere of the genus two Heegaard splitting of $S^3$ via a finite step algorithm.

\noindent\begin{algorithm}
Let $Q$ be an arbitrary reducing sphere of genus two Heegaard splitting of $S^3$.
\begin{description}
		\item[Step-1] If $n_{BQ}=n_{CQ}=0$ then $Q$ is standard. Exit. Else go to step-2.
		\item[Step-2] While $n_{AQ}>n_{BQ}+n_{CQ}$, apply $\beta$ or $\beta^{-1}$ so that $\mathcal{C}(Q)$ decreases. Update $n_{AQ},n_{BQ}$ and $n_{CQ}$. Goto Step-3.
		\item[Step-3] If $n_{AQ}<n_{BQ}+n_{CQ}$ and $n_{BQ}>n_{CQ}$ apply $\varphi$ and update $n_{AQ},n_{BQ}$ and $n_{CQ}$, else if $n_{AQ}<n_{BQ}+n_{CQ}$ and $n_{CQ}>n_{BQ}$ apply  $\varphi\nu$ and update $n_{AQ},n_{BQ}$ and $n_{CQ}$. Go to Step-1. 
	\end{description} 
\end{algorithm}

As $\mathcal{C}(Q)$ decreases strictly at each step until $\mathcal{C}(Q)=1$, and since $n_{AQ}, n_{BQ}$ and $n_{CQ}$ are finite, the above algorithm terminates in finitely many steps. 
	
	Now since for any arbitrarily chosen reducing sphere $Q$, this algorithm provides an automorphism $f$ such that $f(Q)=P$ using only elements from $G_2$, and using the description of stabilizer of $P$ in \cite{scharlemann2003automorphisms} we conclude:
\begin{proposition}\label{prop-generate}
	$G_2$ generates $\mathcal{H}_2$.
\end{proposition}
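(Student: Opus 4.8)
The plan is to run a short orbit--stabilizer argument that bolts the algorithm onto the known structure of the stabilizer of $P$. I would begin with an arbitrary $\psi\in\mathcal{H}_2$ and consider its image $Q=\psi(P)$, which is again a reducing sphere because elements of $\mathcal{H}_2$ carry reducing spheres to reducing spheres. Feeding $Q$ into the algorithm of the previous subsection produces an automorphism $f$ which, by construction, is a word in $\beta^{\pm1},\varphi$ and $\varphi\nu$, hence an element of the subgroup $\langle G_2\rangle$, and which satisfies $f(Q)=P$: the algorithm halts exactly when $\mathcal{C}$ has decreased to $1$, and by the lemma characterizing $\mathcal{C}(Q)=1$ this forces the terminal sphere to be $P$.

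The next step reduces everything to the stabilizer. Since $f(Q)=f(\psi(P))=P$, the composite $f\psi$ fixes $P$, i.e.\ $f\psi\in\mathrm{Stab}_{\mathcal{H}_2}(P)$. It therefore suffices to show $\mathrm{Stab}_{\mathcal{H}_2}(P)\subseteq\langle G_2\rangle$: once this is known, $f\psi\in\langle G_2\rangle$, and because $\langle G_2\rangle$ is a subgroup we get $\psi=f^{-1}(f\psi)\in\langle G_2\rangle$. As $\psi$ was arbitrary, this yields $\mathcal{H}_2=\langle G_2\rangle$. Note that $f^{-1}$ causes no difficulty: it involves only $\beta^{-1}$ and the involutions $\alpha,\nu,\varphi$, and inverses of generators lie in the generated subgroup automatically.

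The genuine content I would import is Scharlemann's description of $\mathrm{Stab}_{\mathcal{H}_2}(P)$ from \cite{scharlemann2003automorphisms}. The automorphisms preserving the standard sphere are precisely $\alpha,\beta,\gamma$, and Scharlemann's analysis identifies the stabilizer as the group they generate. The remaining task is then purely a substitution: $\alpha$ and $\beta$ already belong to $G_2$, and by the lemma on the properties of $\alpha$ and $\nu$ we have $\gamma=\alpha\nu\in\langle G_2\rangle$. Hence $\mathrm{Stab}_{\mathcal{H}_2}(P)\subseteq\langle G_2\rangle$, which closes the loop.

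The main obstacle is exactly this last step. The algorithm only tells us how to push an arbitrary reducing sphere onto $P$; it is silent about automorphisms that already fix $P$, so the whole proof leans on faithfully quoting the stabilizer description from \cite{scharlemann2003automorphisms} and verifying that its generating set is absorbed by $G_2$ via $\gamma=\alpha\nu$. I would take care to state precisely which generating set of the stabilizer is being used and to confirm that $Q=\psi(P)$ satisfies the hypotheses (an honest, non-standard or standard reducing sphere) under which the algorithm was shown to apply; granting those, the argument is complete.
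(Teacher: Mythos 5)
Your proposal is correct and follows essentially the same route as the paper: the paper's (very terse) proof likewise combines the algorithm's output $f\in\langle G_2\rangle$ with $f(Q)=P$ together with Scharlemann's description of the stabilizer of $P$, absorbing $\gamma$ via $\gamma=\alpha\nu$. You have simply made explicit the orbit--stabilizer bookkeeping that the paper leaves implicit.
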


\begin{theorem}\label{repr-thm}
Every element $f$ of $\mathcal{H}_2$ can be written in the form
$$f=\alpha^{a}\nu^{b}\beta^{c}\prod \left(\varphi\nu^{s_i}\beta^{r_i} \right)=\alpha^{a}\nu^{b}\beta^{c}\left(\varphi\nu^{s_n}\beta^{r_n} \right)\circ\cdots\circ\left(\varphi\nu^{s_1}\beta^{r_1} \right) $$ where $a, b, s_i=0,1$ and $c, r_i\in\mathbb{Z}$.
Further such a representation of $f$ is unique.
\end{theorem}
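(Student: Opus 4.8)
The plan is to split the claim into two parts: existence of the representation and its uniqueness. For existence, I would run the algorithm of the preceding subsection in reverse. Given $f \in \mathcal{H}_2$, set $Q = f(P)$, the image of the standard reducing sphere. Since $f$ preserves the Heegaard surface, $Q$ is again a reducing sphere, and by Proposition \ref{prop-generate} and the reduction algorithm we obtain a word $w$ in the alphabet $\{\beta^{\pm 1}, \varphi, \varphi\nu\}$ with $w(Q) = P$. The key bookkeeping point is that each pass through the loop of the algorithm applies a block of the form $\beta^{r}$ (Step-2, via Theorem 4.5) followed by either $\varphi$ or $\varphi\nu$ (Step-3, via Theorem \ref{phi-thm}); reading these off in order and inverting gives $f^{-1}$ restricted to its action on $Q$ as a product $\prod(\varphi\nu^{s_i}\beta^{r_i})$. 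Composing $w$ with $f$ then yields an automorphism fixing $P$, i.e.\ an element of the stabilizer of $P$. Here I would invoke Scharlemann's description of this stabilizer from \cite{scharlemann2003automorphisms}, which is generated by $\alpha$, $\nu$ and $\beta$ with $\alpha, \nu$ of order two; this contributes precisely the leading factor $\alpha^{a}\nu^{b}\beta^{c}$ with $a,b \in \{0,1\}$ and $c \in \mathbb{Z}$, giving the stated normal form $(*)$.

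For uniqueness, the main tool is the monotonicity of the complexity measure $\mathcal{C}$ established in Theorems 4.5 and \ref{phi-thm}. The plan is to argue that the data $(s_i, r_i)$ and the exponents $a,b,c$ are recovered canonically from $f$ and hence cannot differ between two representations. Given $f$ written in the form $(*)$, the reducing sphere $Q = f(P)$ determines the triple $T_Q = (n_{AQ}, n_{BQ}, n_{CQ})$, and I would show by induction on $n$ (the number of $\varphi$-blocks) that the algorithm applied to $Q$ is \emph{forced}: Lemmas 4.3 and 4.4 guarantee $n_{AQ} \neq n_{BQ} + n_{CQ}$ and $n_{BQ} \neq n_{CQ}$ for $Q \neq P$, so exactly one of the three regimes (apply $\beta^{\pm1}$, apply $\varphi$, apply $\varphi\nu$) is available at each stage, and the sign/magnitude of the required $\beta$-power is determined by the formula $N_\beta$ from Theorem 4.5. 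Thus the sequence of innermost blocks $\varphi\nu^{s_1}\beta^{r_1}, \ldots, \varphi\nu^{s_n}\beta^{r_n}$ is read off uniquely as $\mathcal{C}$ strictly descends to $1$.

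Once the product part is pinned down, the residual automorphism lies in the stabilizer of $P$, and I would finish by checking that the expression $\alpha^{a}\nu^{b}\beta^{c}$ is a normal form for that stabilizer. This needs the relations already proved: $\alpha^2 = \nu^2 = 1$ (Lemma 2.1(i)), that $\beta$ commutes with both $\alpha$ and $\nu$ (Lemma \ref{beta-prop}(ii)), and that $\beta$ has infinite order (Lemma \ref{beta-prop}(i)). Together these show the stabilizer is isomorphic to $(\mathbb{Z}/2 \times \mathbb{Z}/2) \times \mathbb{Z}$, in which every element has a unique expression $\alpha^a \nu^b \beta^c$ with $a,b \in \{0,1\}$, $c \in \mathbb{Z}$; so the leading exponents are determined as well.

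I expect the main obstacle to be the uniqueness argument, specifically justifying that the normal form $(*)$ and the algorithm's output are genuinely in bijection. The subtle point is that the block structure $\varphi\nu^{s_i}\beta^{r_i}$ must be shown to be the \emph{only} way the descent can proceed; this relies on the regimes of Theorems 4.5 and \ref{phi-thm} being mutually exclusive and exhaustive for $Q \neq P$, which is exactly what Lemmas 4.3 and 4.4 supply. A secondary technical hazard is interface between the two blocks: I must verify that after a $\varphi$ or $\varphi\nu$ step one lands in the regime $n_{AR} > n_{BR} + n_{CR}$ (as asserted at the end of Theorem \ref{phi-thm}), so that the next $\beta$-exponent is well-defined and the alternating block pattern is forced rather than assumed.
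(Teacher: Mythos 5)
Your existence argument coincides with the paper's: run the reduction algorithm on $Q=f(P)$, record the $\beta$-block and the $\varphi\nu^{s}$ factor produced by each pass through Steps 2--3, and absorb the residual element of the stabilizer of $P$ into the prefix $\alpha^{a}\nu^{b}\beta^{c}$ via Scharlemann's description together with the commutation relations of Lemma \ref{beta-prop}. For uniqueness, however, you take a genuinely different (and more careful) route. The paper's entire argument is the one-line assertion that each factor $\varphi\nu^{s_i}\beta^{r_i}$ strictly increases $\mathcal{C}$ of the intermediate sphere, hence cannot fix it, hence there cannot be two expressions. You instead argue that the algorithm is \emph{deterministic}: the two lemmas $n_{AQ}\ne n_{BQ}+n_{CQ}$ and $n_{BQ}\ne n_{CQ}$ make the three regimes mutually exclusive and exhaustive for $Q\ne P$, Theorem \ref{phi-thm} supplies the re-entry condition $n_{AR}>n_{BR}+n_{CR}$ that forces the alternating block pattern, and the quantity $N_\beta$ pins down each $\beta$-exponent; so the word is canonically recovered from $Q$. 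This buys an actual mechanism for uniqueness where the paper only gestures at one.

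That said, the obstacle you flag at the end is a real gap, and neither your argument nor the paper's closes it. Determinism shows the \emph{algorithm's output} is unique; it does not show that an arbitrary word of the form $(*)$ must coincide with that output, because nothing in the form $(*)$ forces the word to be a descent --- the $r_i$ are arbitrary integers and the factors need not increase $\mathcal{C}$ when applied in order from $P$. Concretely, since $\varphi^{2}=1$ (Lemma \ref{phi-prop}(i)), the identity admits both the empty representation ($a=b=c=0$, $n=0$) and the representation with $n=2$ and all exponents zero, so literal uniqueness of the form $(*)$ fails. The statement only becomes true, and your determinism argument only becomes a complete proof, once the descent condition (each factor strictly increases $\mathcal{C}$ when the word is applied to $P$ from the inside out, equivalently the word is the algorithm's output) is built into the normal form; the paper's one-line argument tacitly assumes exactly this. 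You should either add that hypothesis explicitly or prove that any representation of the form $(*)$ can be reduced to one satisfying it.
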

\begin{proof}
Let $f \in \mathcal{H}_2$ and $f(P) = Q$. If $Q=P$, the algorithm exits. Now since any element of $\mathcal{H}_2$ which fixes $P$ should have the form $\alpha^{a}\nu^{b}\beta^{c}$, where $a, b, c$ are as in the statement of the theorem, we are done. Note that in writing the prefix, the description of the stabilizer of $P$ in \cite{scharlemann2003automorphisms} and the commutativity relations of the generators are used. If $Q \neq P$, the above algorithm starts in step 2 with an application of an integral (possibly zero) power of $\beta$. Once $n_{AQ}<n_{BQ}+n_{CQ}$ the algorithm reaches step 3. At this stage either a $\varphi$ or a $\varphi\nu$ is applied either of which can be expressed as $\varphi\nu^{s}$, where $s=0,1$. After this application of $\varphi\nu^{s}$, by Theorem \ref{phi-thm}, the updated values of $n_{AQ}, n_{BQ}, n_{CQ}$ satisfy the inequality $n_{AQ}>n_{BQ}+n_{CQ}$. Then the algorithm either exits ($Q=P$) or continues with applications of powers of $\beta$. Since at each step $\mathcal{C}(Q)$ decreases, the algorithm has to terminate. Once the algorithm exits, $f$ can have a prefix of the form $\alpha^{a}\nu^{b}\beta^{c}$. So the algorithm expresses $f$ in the above form.

For the uniqueness, note that barring the prefix, for each factor $\varphi\nu^{s_i}\beta^{r_i}$, $\mathcal{C}(\varphi\nu^{s_i}\beta^{r_i}(Q))> \mathcal{C}(Q)$. So $\varphi\nu^{s_i}\beta^{r_i}(Q)$ cannot be equal to $Q$. So $f$ cannot have two different expressions of the above form.  
\end{proof}
\subsection{Illustration of the algorithm}
	Here we present a couple of examples of two reducing spheres and observe the application of the above algorithm. Consider the following examples:
	\begin{figure}[h!]
		\begin{minipage}{.49\linewidth}
			\begin{center}
			\includegraphics[width=.8\linewidth]{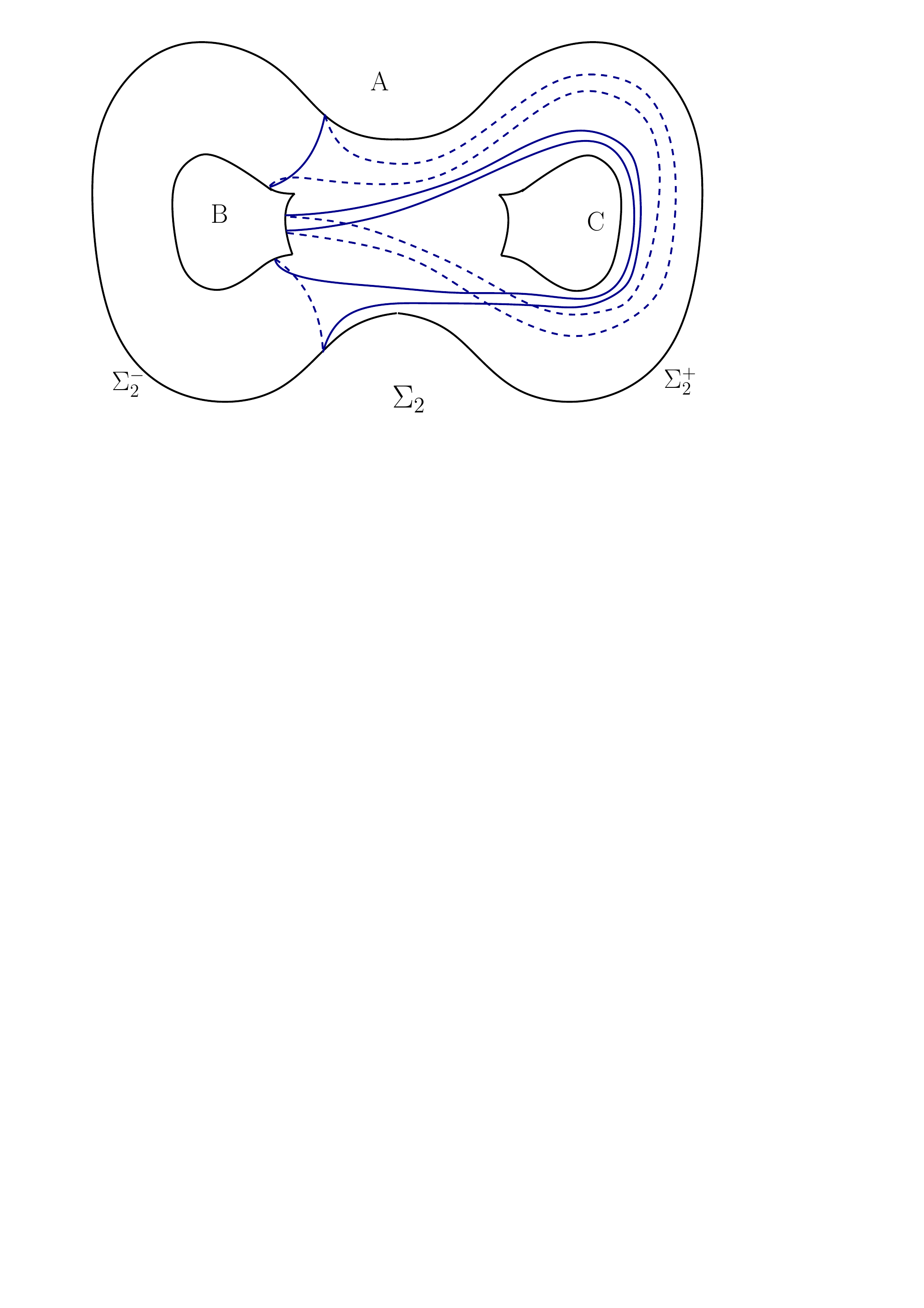}
			\caption{Example 1}\label{exm1}
			\end{center}
		\end{minipage}
	\begin{minipage}{.49\linewidth}
		\begin{center}
		\includegraphics[width=.8\linewidth]{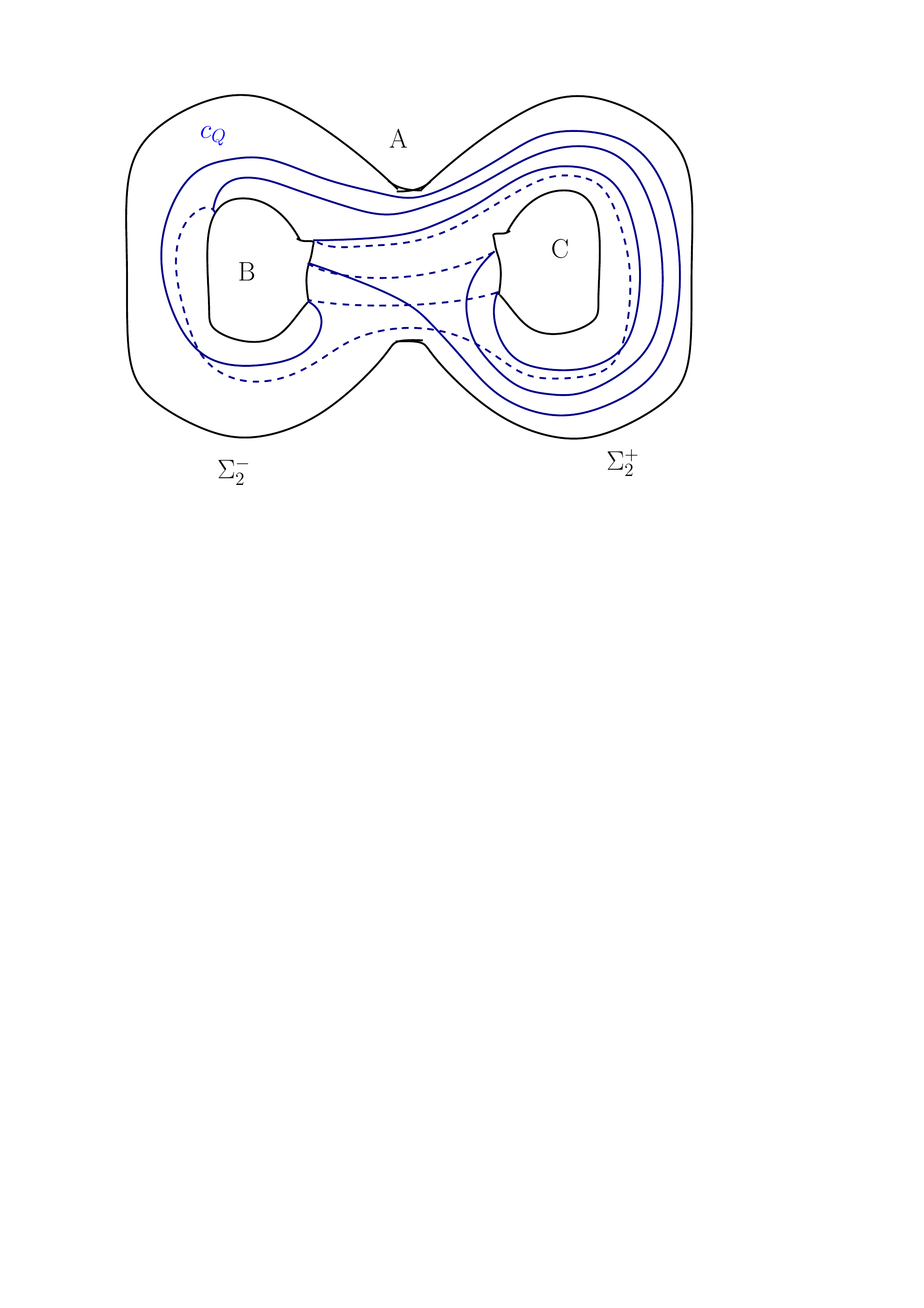}
		\caption{Example 2}\label{exm2}
		\end{center}		
	\end{minipage}
	\end{figure}
	
	In figure \ref{exm1}, $n_{AQ}<n_{BQ}+n_{CQ}$, and $n_{BQ}>n_{CQ}$. So we apply $\varphi$. In figure \ref{exm2} we have, $n_{BQ}+n_{CQ}=6>0=n_{AQ}$ also $n_{BQ}>n_{CQ}$. Here too we apply $\varphi$. On application of $\varphi$
	we get the spheres in figure \ref{exm1a} and \ref{exm2a} respectively.
	\begin{figure}[h!]
		\begin{minipage}{.49\linewidth}
			\centering\includegraphics[width=.8\linewidth]{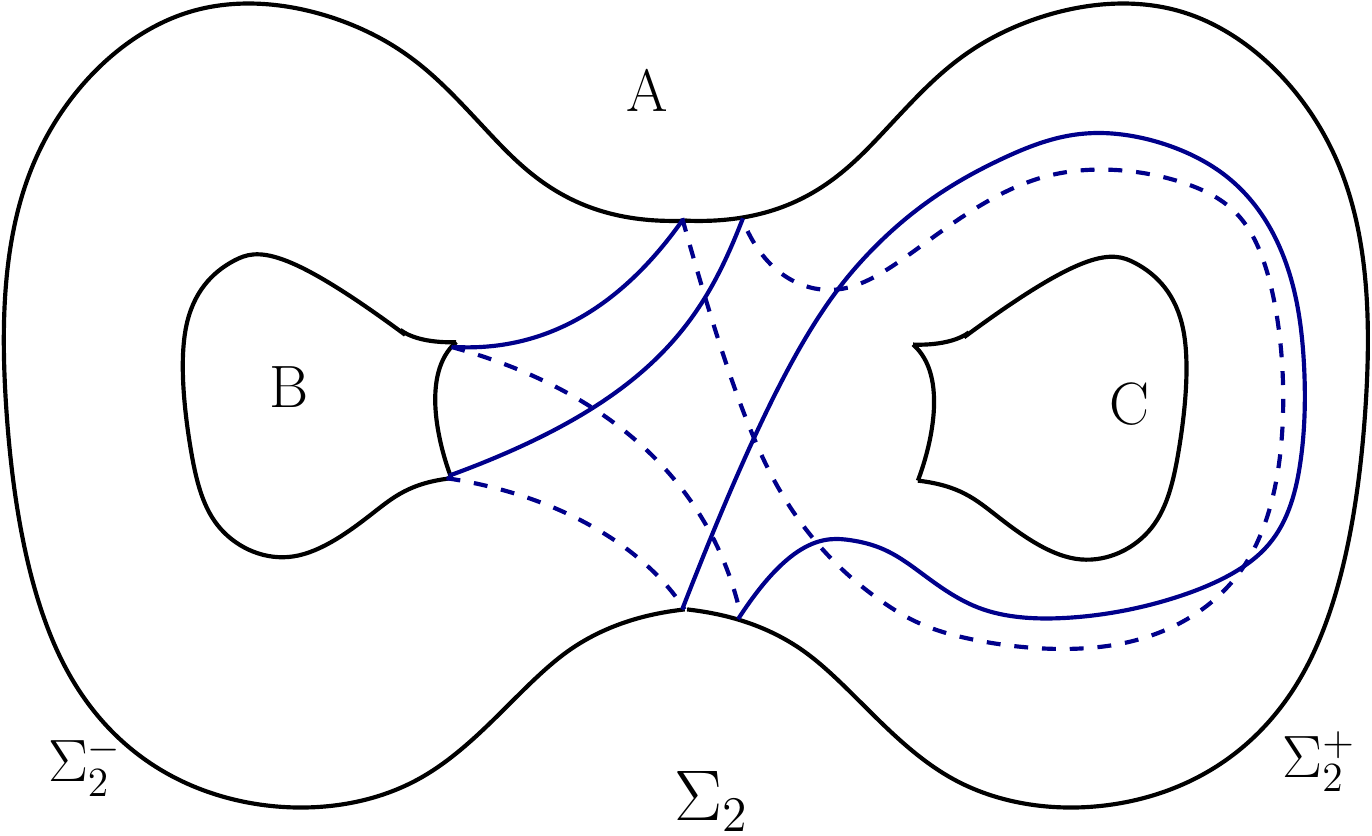}
			\caption{Example 1}\label{exm1a}
		\end{minipage}
		\begin{minipage}{.49\linewidth}
			\centering\includegraphics[width=.8\linewidth]{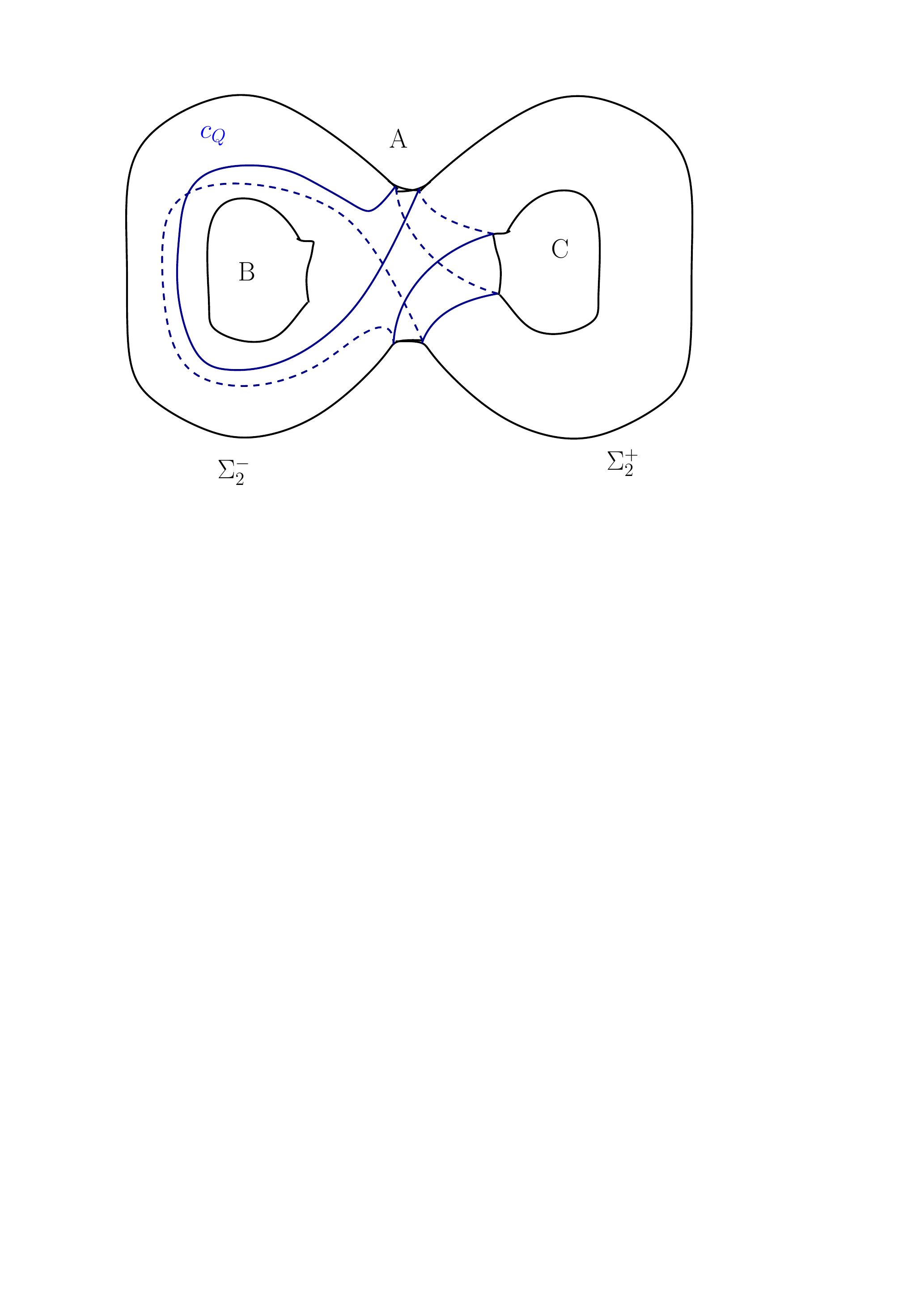}
			\caption{Example 2}\label{exm2a}
		\end{minipage}
	\end{figure}

	Let $\varphi(Q)=R$. Then from both figure \ref{exm1a} and figure \ref{exm2a}, we have $n_{AR}=4>2=n_{BR}+n_{CR}$. So now we apply $\beta$ or $\beta^{-1}$ suitably. For instance here we apply $\beta^{-1}$ in both cases. The result is presented in figure \ref{exm1b} and figure \ref{exm2b} respectively.
	\begin{figure}[h!]
		\begin{minipage}{.49\linewidth}
			\centering\includegraphics[width=.8\linewidth]{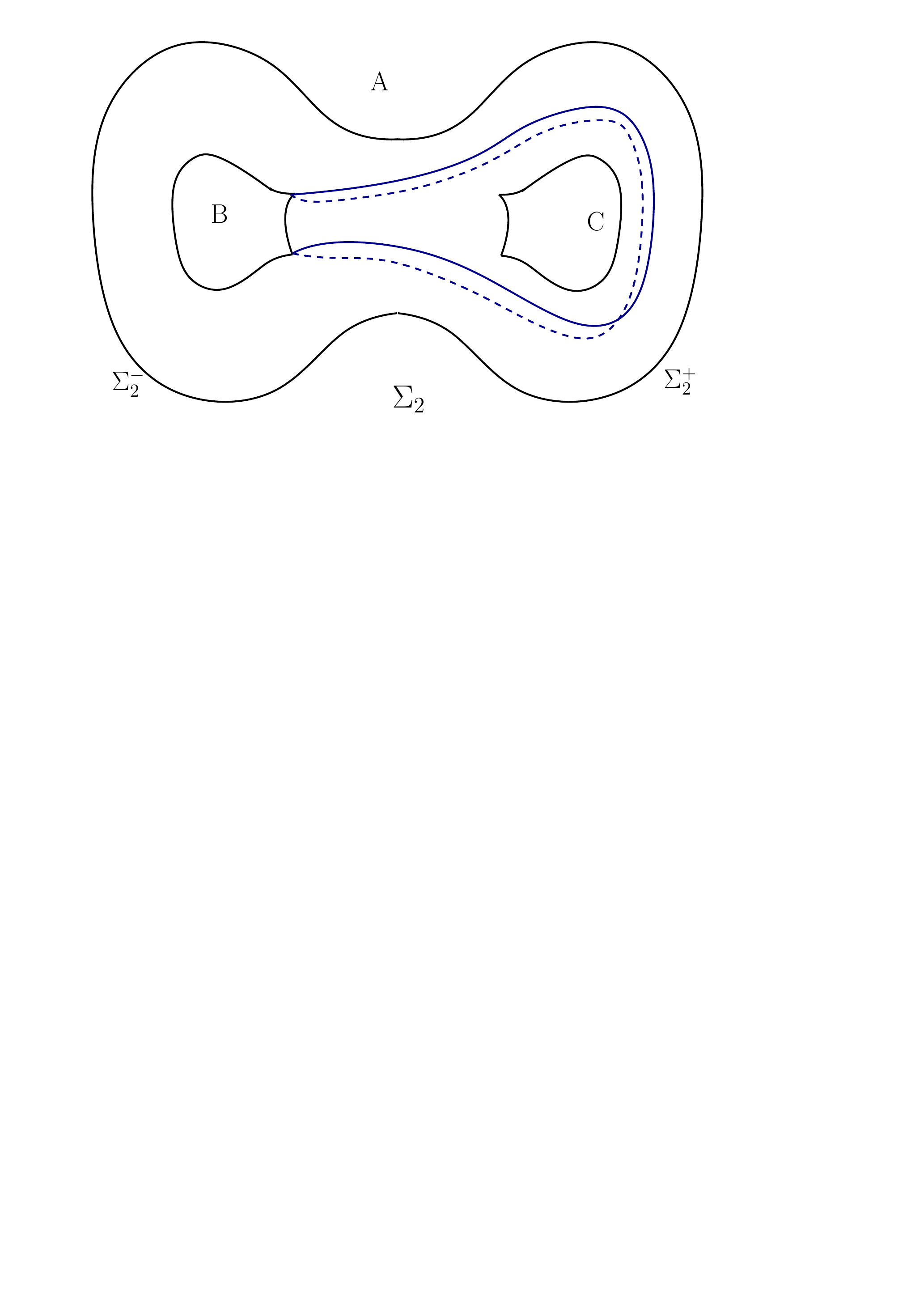}
			\caption{Example 1}\label{exm1b}
		\end{minipage}
		\begin{minipage}{.49\linewidth}
			\centering\includegraphics[width=.8\linewidth]{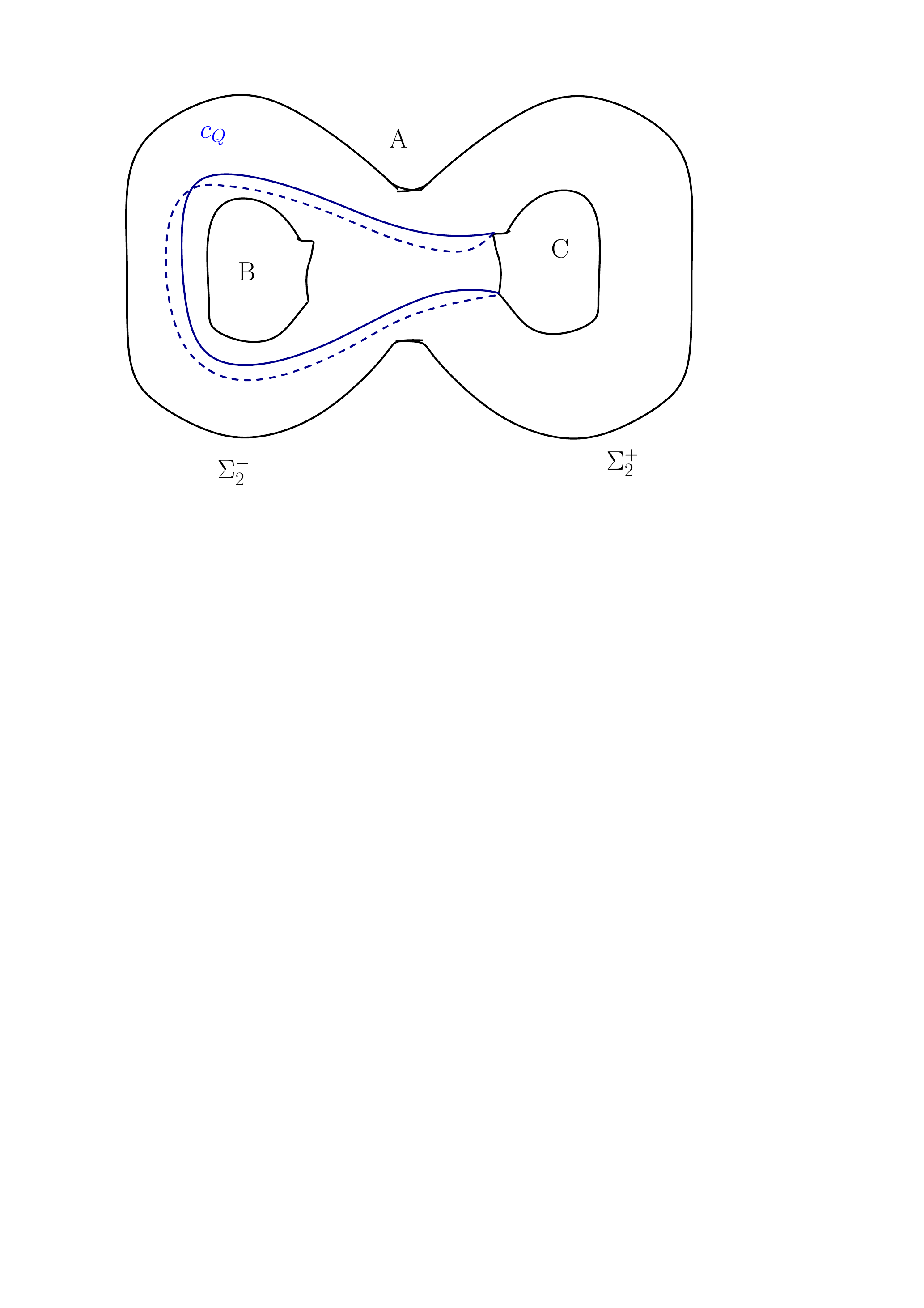}
			\caption{Example 2}\label{exm2b}
		\end{minipage}
	\end{figure}
	Now if $\beta^{-1}(R)=S$, in both cases we have $n_{AS}=0$. In the first case, we have $n_{BS}=2, n_{CS}=0$ whereas in the other we have $n_{BS}=0, n_{CS}=2$. So in both cases, $n_{AS}<n_{BS}+n_{CS}$. In first case $n_{BS}>n_{CS}$ and we apply $\varphi$ again whereas in the second one $n_{CS}>n_{BS}$ and so we apply $\varphi\nu$. It can be easily calculated that in both cases we are left with the standard curve $c_P$. 
	
	Therefore the automorphism that takes the first one to the standard is given by $f=\varphi\beta^{-1}\varphi$ and the one that takes the second one to standard is given by $g=\varphi\nu\beta^{-1}\varphi$.
	
		\subsection{The automorphism $\delta$ is in $\langle G_2 \rangle$}
	From the description of $\alpha$ and $\beta$ they are already identical with the corresponding generators in \cite{scharlemann2003automorphisms}. Also $\gamma=\nu\alpha$.
Consider the subset of $\mathcal{MCG}(\Sigma_2)$ given by $G_2 = \{\nu, \alpha, \beta, \varphi \}.$
	We will show that the automorphism $\delta$ described in  \cite{scharlemann2003automorphisms} is in $\langle G_2 \rangle$.
	
	\begin{proposition}
		The automorphism $\delta$ from \cite{scharlemann2003automorphisms} is generated by $\nu$ and $\varphi$ and we can express $\delta$ as $\delta = \nu^{-1}\varphi\nu\varphi = (\nu\varphi)^2$.
	\end{proposition}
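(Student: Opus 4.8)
The plan is to reduce the claimed identity to a finite curve-by-curve verification, exactly in the style used for the earlier lemmas of this section. The equality $\nu^{-1}\varphi\nu\varphi = (\nu\varphi)^2$ is immediate once we recall that $\nu^2=1$, so that $\nu^{-1}=\nu$ and hence $\nu^{-1}\varphi\nu\varphi = \nu\varphi\nu\varphi = (\nu\varphi)^2$. The real content is therefore the single claim $\delta = (\nu\varphi)^2$ in $\mathcal{MCG}(\Sigma_2)$.

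First I would compute the action of $\nu\varphi$ on the separating system $\{A,B,C\}$, using Lemma \ref{phi-prop} for $\varphi$ together with the stated action of $\nu$. This gives $\nu\varphi(A)=C$, $\nu\varphi(B)=A$, $\nu\varphi(C)=B$, so $\nu\varphi$ realizes a $3$-cycle and $(\nu\varphi)^2$ sends $A\mapsto B$, $B\mapsto C$, $C\mapsto A$. Since $\nu$ and $\varphi$ each preserve $\Sigma_2'$ and $\Sigma_2''$, so does $(\nu\varphi)^2$; and although $\nu$ swaps the summands $\Sigma_2^\pm$ while $\varphi$ fixes them, the square $(\nu\varphi)^2$ again preserves $\Sigma_2^\pm$. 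This matches the qualitative description of the order-$3$ automorphism $\delta$ in figure \ref{generate2a}, and it is a useful consistency check that $((\nu\varphi)^2)^3$ and $\delta^3$ both give the identity permutation of $\{A,B,C\}$.

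The step I expect to be the crux is confirming agreement on the non-separating curves $X,Y,Z$, and not merely on $A,B,C$: since $A\cup B\cup C$ cuts $\Sigma_2$ only into two pairs of pants, the permutation of $\{A,B,C\}$ determines a mapping class only up to Dehn twists along $A,B,C$, so the longitudinal data must be tracked separately. Here one must carry the images of $X,Y,Z$ through $\nu\varphi$ twice, using the Dehn-twist word $\varphi = T_Z^{-1}T_YT_CT_YT_XT_CT_Y$ (equivalently the curve images recorded in figure \ref{phi2-act}) to see how the $T_Z^{-1}$ factor and the exchange $X\leftrightarrow Y$ interact with the swap $B\leftrightarrow C$ performed by $\nu$.

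Finally, to close the argument in the paper's idiom, I would verify that the composite $\delta^{-1}(\nu\varphi)^2$ fixes each of $A,B,C,X,Y,Z$ and preserves $\Sigma_2'$ and $\Sigma_2''$. By the criterion used repeatedly in this section—a mapping class of $\Sigma_2$ fixing this standard system of curves together with the complementary subsurfaces is isotopic to the identity—this forces $\delta^{-1}(\nu\varphi)^2 = 1$, that is, $\delta = (\nu\varphi)^2$. Since $\delta$ and $\nu,\varphi$ then all lie in $\langle G_2\rangle$, the asserted membership $\delta\in\langle\nu,\varphi\rangle$ follows at once.
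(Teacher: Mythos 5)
Your proposal is correct and takes essentially the same route as the paper: both compute the images of $A,B,C,X,Y,Z$ under $\nu\varphi$ applied twice, compare with the stated action of $\delta$, and conclude via the ``fixes the standard curve system and the subsurfaces $\Sigma_2'$, $\Sigma_2''$, hence is isotopic to the identity'' criterion, with your remark on why checking $A,B,C$ alone would not suffice being a worthwhile clarification the paper leaves implicit. One side comment is inaccurate: $\varphi$ does not fix the genus-one summands $\Sigma_2^\pm$ (it sends $B$ to $A$ and so moves $c_P$), but this consistency check is not load-bearing, since the actual argument only uses invariance of $\Sigma_2'$ and $\Sigma_2''$.
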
 
	
	\begin{proof}
		From the earlier discussion, we have $\varphi(B)=A$. We also have
		$$\varphi(A)=B, \varphi(C)=C, \varphi(X)=Y, \varphi(Y)=X, \text{ and } \varphi(Z)=Z.$$ Now since $\gamma$ exchanges the two genus one summands and leaves $X$ and $A$ invariant therefore
		$$\nu\varphi(A)=C, \nu\varphi(B)=A, \nu\varphi(C)=B,\nu\varphi(X)=Z, \nu\varphi(Y)=X  \text{ and } \nu\varphi(Z)=Y.$$
		Now if $\psi= \nu\varphi\nu\varphi$, then 
		$$\psi(A)=B, \psi(B)=C, \psi(C)=A,\psi(X)=Y,\psi(Y)=Z,   \text{ and } \psi(Z)=X.$$
		Now from the description of $\delta$ we have
		$$\delta(A)=B,\delta(B)=C, \delta(C)=A, \delta(X)=Y,\delta(Y)=Z \text{ and } \delta(Z)=X.$$
		Therefore, $\psi^{-1}\delta$ fixes all the above mentioned loops on $\Sigma_2$ and also fixes $\Sigma_2'$ and $\Sigma_2''$. But that implies $\psi^{-1}\delta=1$ i.e. $\delta=\psi=(\nu\varphi)^2$.\\
		This completes the proof. 
	\end{proof}
	
	Therefore, elements in $G_2$ generates the elements of $\mathcal{H}_2$ proposed by \cite{scharlemann2003automorphisms}. Thus this gives another proof of the fact that $G_2$ generates $\mathcal{H}_2$.
\bibliographystyle{plainnat}
\nocite{*}
\bibliography{bib}

\end{document}